\documentclass[11pt]{article}

\usepackage[margin=1in]{geometry}
\usepackage[T1]{fontenc}
\usepackage[utf8]{inputenc}
\usepackage{lmodern}
\usepackage{microtype}
\usepackage{amsmath,amssymb,amsthm,mathtools}
\usepackage{enumitem}
\usepackage{tikz}
\usetikzlibrary{positioning,fit,calc,shapes.geometric}
\usepackage[colorlinks=true,linkcolor=blue,citecolor=blue,urlcolor=blue]{hyperref}
\hypersetup{
  pdftitle={Hypergraph Universality and Erd\H{o}s--P\'{o}sa Failure for Chromatically Rich Odd Cycles},
  pdfauthor={Shuyan Chen},
  pdfkeywords={odd cycle, chromatic number, Erdos-Posa property, clutter, hypergraph transversal, finite-congestion packing, fractional packing}
}

\newtheorem{theorem}{Theorem}[section]
\newtheorem{lemma}[theorem]{Lemma}
\newtheorem{proposition}[theorem]{Proposition}
\newtheorem{corollary}[theorem]{Corollary}
\theoremstyle{definition}
\newtheorem{definition}[theorem]{Definition}
\newtheorem{problem}[theorem]{Problem}
\theoremstyle{remark}
\newtheorem{remark}[theorem]{Remark}

\newcommand{\calC}{\mathcal C}

\newcommand{\calF}{\mathcal F}
\newcommand{\calH}{\mathcal H}

\newcommand{\calX}{\mathcal X}
\newcommand{\tr}{\operatorname{tr}}

\title{Hypergraph Universality and Erd\H{o}s--P\'{o}sa Failure for Chromatically Rich Odd Cycles}
\author{Shuyan Chen\\
\small Department of Mathematics, University of Manchester\\
\small \texttt{shuyan.chen-2@student.manchester.ac.uk}}
\date{}

\begin{document}
\maketitle

\begin{abstract}
For every fixed $k\ge3$, every finite nonempty clutter $\calH$ can be realized exactly as the family of inclusion-minimal terminal traces of the \emph{$k$-bad} odd cycles, namely the odd cycles $C$ satisfying
\[
\chi(G[V(C)])\ge k+1.
\]
The host graph $G$ may be chosen $2$-connected, with $T=V(\calH)$ independent, and with
\[
\chi(G)=k+1,\qquad \omega(G)=k,\qquad |E(G)|\le k|V(G)|.
\]
Given any integer $B\ge1$, one realization simultaneously preserves the transversal number, the fractional packing number, and every integer $c$-packing number for $1\le c\le B$.  Thus chromatic richness on odd-cycle spans has the full packing-covering complexity of arbitrary finite set systems, even in sparse graphs whose chromatic number exceeds their clique number by one.

As a consequence, for every $b,N\ge1$ and every $\varepsilon>0$, there is such a graph with
\[
\tau_k(G)=N,\qquad
\nu_k^{1/c}(G)=c\quad(1\le c\le b),\qquad
\nu_k^*(G)<1+\varepsilon.
\]
Hence every finite-congestion and fractional Erd\H{o}s--P\'{o}sa property fails for $k\ge3$, with an asymptotically optimal fractional obstruction.  This contrasts sharply with $k=2$, where the relevant cycles are the ordinary odd cycles: the integral property fails, while Reed's theorem yields every congestion level at least two and the fractional property.  We also characterize the shortest $k$-bad odd cycles by a single fixed witness graph.
\end{abstract}

\medskip
\noindent\textbf{Keywords.} odd cycle, chromatic number, Erd\H{o}s--P\'{o}sa property, clutter, hypergraph transversal, finite-congestion packing, fractional packing.

\medskip
\noindent\textbf{2020 Mathematics Subject Classification.} 05C15, 05C38, 05C70.

\section{Introduction}
Chromatic richness on the vertex set of a single odd cycle is sufficiently expressive to encode arbitrary finite set systems.  The main result of this paper shows that, from the first nonclassical threshold onward, every finite clutter occurs exactly as the minimal terminal-trace system of such cycles, with its transversal and packing parameters preserved.  This universality yields a sharp transition in Erd\H{o}s--P\'{o}sa behaviour.

Following a problem of Erd\H{o}s and Hajnal recorded by Gy\'{a}rf\'{a}s~\cite{GyarfasMemories}, define
\[
\psi(G)=\max\{\chi(G[V(C)]):C\text{ is an odd cycle of }G\},
\]
and put $\psi(G)=0$ if $G$ is bipartite.  For a fixed integer $k\ge2$, let $\calC_k(G)$ be the family of odd cycles $C$ satisfying
\[
\chi(G[V(C)])\ge k+1;
\]
we call them \emph{$k$-bad}.  We study the integral, bounded-congestion, and fractional packing-covering theory of $\calC_k(G)$.  A companion preprint studies the finite-order extremal separation between $\chi(G)$ and $\psi(G)$~\cite{ChenSpanDefect}; the present paper studies the individual cycles that witness large chromatic span.

The classical Erd\H{o}s--P\'{o}sa theorem initiated the systematic study of packing-covering dualities for cycles~\cite{ErdosPosa}; see~\cite{RaymondThilikos} for a survey.  Odd cycles fail the integral Erd\H{o}s--P\'{o}sa property~\cite{DejterNeumannLara}, whereas Reed proved their half-integral property~\cite{Reed}.  Prescribed vertices, parity, modularity, and group labels lead to a broad theory of constrained cycles~\cite{KakimuraKawarabayashiMarx,PontecorviWollan,KakimuraKawarabayashiMod,KakimuraKawarabayashiOdd,HuynhJoosWollan,ThomasYoo,GollinEtAlHalf,GollinEtAlFull}; in particular, modern group-labelled frameworks provide strong packing-covering theorems for broad classes governed by finitely many fixed constraints.  Gorsky, Hendrey, and Huynh proved failure at every finite congestion level for prime-length cycles and, more generally, for cycles whose lengths belong to a set of lower density zero, even in planar graphs~\cite{GorskyHendreyHuynh}.  Their theorem reveals strong obstructions for prescribed length families.  The phenomenon here is universal set-system complexity: for every $k\ge3$, chromatically rich odd cycles realize every finite clutter exactly, while preserving the transversal and fractional packing optima and any prescribed finite initial segment of the congestion hierarchy.

To state the main result, let $T\subseteq V(G)$.  The \emph{$T$-trace} of a cycle $C$ is $V(C)\cap T$.  A clutter is a hypergraph in which no edge properly contains another.  We take every nonempty clutter on its support
\[
V(\calH)=\bigcup_{e\in E(\calH)}e.
\]

\begin{theorem}[Hypergraph universality, informal]\label{thm:intro-universality}
Fix $k\ge3$.  Every finite nonempty clutter $\calH$, with terminal set $T=V(\calH)$, can be realized as the family of inclusion-minimal $T$-traces of the $k$-bad odd cycles in a $2$-connected graph $G$ containing $T$ as an independent set and satisfying
\[
\chi(G)=k+1,\qquad \omega(G)=k,\qquad |E(G)|\le k|V(G)|.
\]
Moreover, for every prescribed integer $B\ge1$, the construction can be chosen so that one graph $G$ preserves exactly the transversal number, the fractional packing number, and all integer $c$-packing numbers for $1\le c\le B$.
\end{theorem}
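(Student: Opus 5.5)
The plan is a gadget construction. For each hyperedge $e\in E(\calH)$ we build a ``witness'' subgraph $W_e$ whose only possible $k$-bad odd cycles pass through every terminal of $e$. We then glue the $W_e$ along $T$ so that no new $k$-bad odd cycles appear, apart from cycles whose trace contains some $e$.

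\textbf{The witness.} Start from a fixed $(k+1)$-chromatic graph $F$ with $\omega(F)=k$ that contains a Hamiltonian odd cycle; the natural choice for $k\ge3$ is a Mycielski/Hajós-type construction, or $K_k$ joined suitably with $C_5$-like pieces. Replace $|e|$ edges of a Hamiltonian cycle of $F$ by paths through the terminals of $e$. Each path is subdivided so that its parity is preserved and the chromatic number of the span drops below $k+1$ unless the whole path is used. Concretely, a terminal $t$ is inserted as the middle vertex of a ``diamond'': $t$ is joined to two adjacent vertices $x,y$ of $F$ together with parity-correcting degree-two vertices. Then in any proper $k$-colouring of a subgraph missing $t$, the edge $xy$ can be relaxed. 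The key local lemma: $\chi(W_e[V(C)])\ge k+1$ for an odd cycle $C$ of $W_e$ forces $V(C)\supseteq V(F)$-critical part and hence $C$ threads all terminal detours, so $V(C)\cap T\supseteq e$. Conversely, the Hamiltonian-type cycle gives trace exactly $e$. We also check $\omega=k$, $\chi=k+1$, sparsity $|E|\le k|V|$ (each gadget has bounded density; the $K_k$ pieces contribute about $\binom{k}{2}$ edges over $k$ vertices), independence of $T$, and 2-connectivity.

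\textbf{Gluing.} Identify terminals across the gadgets. The main obstacle is that odd cycles can now wander through several gadgets via shared terminals. Their spans may then be $(k+1)$-chromatic while having a trace containing no hyperedge, or, worse, create new minimal traces. To control this, make each terminal a cut-like interface of degree two inside each gadget, attached by long subdivided paths, so that any cycle entering gadget $W_e$ at a terminal must leave through another terminal of $e$. We then argue that the span of such a cycle restricted to each gadget is $k$-colourable unless the cycle fully traverses some $W_e$'s critical core. Colourings then combine across gadgets because they meet only at independent terminals, which can be recoloured along the degree-two paths (a $k\ge3$ colour-extension argument; this is where $k=2$ fails). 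Hence every $k$-bad cycle has trace containing some $e$, and the minimal traces are exactly $E(\calH)$.

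\textbf{Parameters.} Preserving $\tau$ is now immediate from the trace characterisation once we ensure that non-terminal vertices are never better hitters. To achieve this, replace each non-terminal structure by $B'$ parallel copies, with $B'$ larger than $\max(B,\tau(\calH),|T|)$, so that deleting few non-terminal vertices cannot destroy all $k$-bad cycles with a given trace. Fractional and $c$-packing numbers for $c\le B$ are preserved similarly. Each hyperedge $e$ gets enough disjoint copies of its witness that the only congestion bottleneck is at terminals, which are single vertices. Thus a $c$-packing of $k$-bad cycles projects onto a $c$-packing of hyperedges, and every hyperedge packing lifts. The fractional statement follows by the same projection and lifting, applied to LP weights.
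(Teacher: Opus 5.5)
There is a genuine gap. Your parameter step matches the paper: many private copies per hyperedge, projecting a packing by choosing one hyperedge inside each trace, and lifting hyperedge packings to designated cycles. But that step presupposes the trace characterisation, and your witness gadget does not deliver it.

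\textbf{The diamond fails.} Suppose the replaced edge $xy$ is kept, as your description suggests, since the diamond joins $t$ to two \emph{adjacent} vertices. Then the odd Hamiltonian cycle of $F$ survives in $G$. Its span contains $F$, so it is a $k$-bad cycle with empty trace, and the minimal traces collapse to $\emptyset$. Suppose instead $xy$ is deleted. Then a vertex $t$ adjacent to both $x$ and $y$ does not force them to receive different colours.

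\textbf{The underlying obstruction is general.} If $\deg_H(t)\le k-1$, then $H$ is $k$-colourable if and only if $H-t$ is, because $t$ can be coloured last. Your terminals have degree two in each gadget, so (as $k\ge3$) they are chromatically irrelevant in the span of any cycle lying in one gadget. The whole $(k+1)$-chromatic obstruction therefore sits on private vertices, and only topology could force a bad cycle through all of $e$. The proposal supplies no such control: the core is dense, and an odd cycle may cover it through its own edges or through only some of the detours. Your gluing paragraph reduces badness to ``fully traversing some critical core''. The implication from that to ``trace contains $e$'' is exactly the missing local lemma, so cycles wandering across gadgets remain uncontrolled.

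\textbf{The missing idea.} You need the paper's one-vertex-repairable colour-forcing core, in which terminals have degree at least $k$ and every repair colouring is uniform on terminals.
\begin{enumerate}
\item Join $U_{e,j}=e\cup P_{e,j}$ completely to a private $K_{k-1}$. This forces one common colour on $U_{e,j}$.
\item Match $U_{e,j}$ into a private copy of the vertex-$k$-critical graph $A_{k,r}$. The core is then $(k+1)$-chromatic.
\item After deleting any $t\in e$, the graph $A-a_t$ takes $k-1$ colours, and every surviving terminal keeps colour $k$.
\end{enumerate}
Since all terminals receive colour $k$ in every gadget, these local colourings agree on $T$. This gives the global converse: if $X\cap T$ contains no edge of $\calH$, then $G[X]$ is $k$-colourable. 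That single statement controls every cycle, including those crossing several gadgets. The odd cycle is supplied separately by a private odd frame through all core vertices.

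\textbf{Two smaller repairs.} First, gluing only along $T$ is not $2$-connected in general. Two disjoint hyperedges give a disconnected graph, and the parallel copies for a singleton edge $\{t\}$ meet only at $t$. The paper fixes this with a central cycle $R$ joined to each gadget by two private length-two paths, which the converse colouring absorbs. Second, a fixed $F$ cannot host hyperedges of unbounded size. The paper pads every $U_{e,j}$ to a common size $r\ge\max_e|e|$ with $r\equiv k\pmod 2$.
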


The key mechanism is the converse colouring property
\[
X\cap T\text{ contains no edge of }\calH
\quad\Longrightarrow\quad
G[X]\text{ is }k\text{-colourable}.
\]
It controls every bad cycle in the host graph, including cycles assembled across several gadgets.  A one-vertex-repairable colour-forcing core realizes each hyperedge, a private odd frame converts the core into a bad cycle, and private amplification gives the exact integral and fractional parameter identities.  Together these ingredients yield an exact packing-covering transfer from finite clutters to chromatically rich odd cycles.  In particular, universal set-system complexity already occurs in sparse, $2$-connected graphs with $\chi(G)-\omega(G)=1$.

Applying the realization theorem to complete uniform hypergraphs gives the sharp global consequence.

\begin{theorem}[Simultaneous bounded-congestion and fractional failure]\label{thm:intro-sharp-failure}
Let $k\ge3$.  For all integers $b,N\ge1$ and every $\varepsilon>0$, there is a $2$-connected graph $G$ such that
\[
\chi(G)=k+1,\qquad \omega(G)=k,\qquad |E(G)|\le k|V(G)|,
\]
and
\[
\tau_k(G)=N,\qquad
\nu_k^{1/c}(G)=c\quad\text{for every }1\le c\le b,
\qquad
\nu_k^*(G)<1+\varepsilon.
\]
\end{theorem}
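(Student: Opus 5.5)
We apply Theorem~\ref{thm:intro-universality} to the complete uniform clutter $\calH=K_n^{(r)}$, whose vertex set is $[n]$ and whose edges are all $r$-subsets of $[n]$. The integers $n$ and $r$ are chosen in terms of $b$, $N$ and $\varepsilon$, and we take $B=b$. The realization gives a $2$-connected graph $G$ with $\chi(G)=k+1$, $\omega(G)=k$ and $|E(G)|\le k|V(G)|$. In $G$ we have $\tau_k(G)=\tau(\calH)$, $\nu_k^*(G)=\nu^*(\calH)$, and the $c$-packing number of $\calC_k(G)$ equals that of $\calH$ for every $1\le c\le b$. Here the $c$-packing number counts edges such that every vertex lies in at most $c$ of them, and $\nu_k^{1/c}(G)$ is this quantity for $\calC_k(G)$. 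The problem therefore reduces to computing these parameters for $K_n^{(r)}$, which are classical.

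The values for $K_n^{(r)}$ are as follows.

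\emph{Transversal number.} A set $S$ meets every $r$-subset if and only if $|[n]\setminus S|\le r-1$. Hence $\tau(K_n^{(r)})=n-r+1$.

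\emph{Fractional packing number.} Assign weight $1/r$ to each vertex. This is a fractional cover, so $\nu^*\le n/r$. Assign weight $1/\binom{n-1}{r-1}$ to each edge. Every vertex then receives load exactly $1$, so this is a feasible fractional packing of value $\binom nr/\binom{n-1}{r-1}=n/r$. Thus $\nu^*(K_n^{(r)})=n/r$.

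\emph{$c$-packing number.} A $c$-packing has total size at most $cn$, so it contains at most $\lfloor cn/r\rfloor$ edges. When $r\mid n$ this bound is attained by taking $c$ copies of a perfect matching (repeated edges are allowed in a $c$-packing). When $r\nmid n$, one can instead use disjoint cyclic-interval edges, placing $c$ edges greedily around a cycle of length $n$.

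To obtain $\nu^{1/c}=c$ exactly for all $c\le b$, together with $\nu^*<1+\varepsilon$, the right regime is $n<2r$ with $n/r$ close to $1$. Write $n=r+s$ with $s\ge1$. Then $\tau=s+1$, so we take $s=N-1$; when $N=1$ this gives $n=r$ and a single edge, and all the claims are trivial. We have $\nu^*=(r+s)/r<1+\varepsilon$ once $r>s/\varepsilon$. For the $c$-packing number, the upper bound is $\lfloor c(r+s)/r\rfloor$, which equals $c$ provided $cs<r$, that is, $r>bs$. For the lower bound, $c$ copies of a single edge form a $c$-packing of size $c$. It therefore suffices to choose $r>\max(bs,\,s/\varepsilon)$, say $r=(b+1)(N-1)\lceil 1/\varepsilon\rceil+1$, and $n=r+N-1$.

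The main obstacle is making sure the definitions match. Theorem~\ref{thm:intro-universality} has to preserve the parameters in the same sense in which $\nu_k^{1/c}$ is defined: we need that $\nu_k^{1/c}(G)$ counts $k$-bad odd cycles with vertex-congestion at most $c$, possibly with repetition, and that this equals the hypergraph $c$-packing number of $\calH$. If repetitions are not allowed, the lower bound $\nu_k^{1/c}\ge c$ instead requires $c$ distinct edges. These exist because $\binom{n}{r}\ge n\ge b$; we can also enlarge $r$ if needed. Any $c$ distinct $r$-subsets of $[n]$ form a $c$-packing, since no vertex lies in more than $c$ of them. With this settled, the theorem follows.
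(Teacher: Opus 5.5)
Your proposal is correct and is essentially the paper's proof. Both apply the realization theorem to $\binom{[n]}{r}$ with $n=r+N-1$ and $r>\max\{b(N-1),(N-1)/\varepsilon\}$. Your averaging bound $r\sum_e z_e\le cn<(c+1)r$ replaces the paper's common-intersection argument, but both rest on the same condition $r>c(n-r)$.

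Your closing worry dissolves under the paper's definitions. There $\nu_k^{1/c}(G)$ counts distinct cycles, while $\nu_c^{\mathbb Z}(\calH)$ allows repeated edges, and Theorem~\ref{thm:realization} bridges the two via the $M\ge c$ designated cycles per edge with disjoint private parts. Your main argument with $s=0$ therefore already covers $N=1$ without special treatment.
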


The value $1+\varepsilon$ is asymptotically best possible: a nonempty finite set system with transversal number larger than one has fractional packing number strictly larger than one.  At $k=2$, the $k$-bad cycles are precisely the odd cycles.  A half-integral packing of $h$ cycles gives a fractional packing of value $h/2$, so
\[
\nu^{1/2}(G)\le2\nu^*(G),
\]
and $\nu^{1/2}(G)\le\nu^{1/b}(G)$ for every $b\ge2$.  The results of Dejter--Neumann-Lara and Reed therefore complete the other side of the transition:
\[
\begin{array}{ll}
k=2:
& \text{the ordinary integral property fails, whereas every congestion}\\[-1pt]
& \text{level $b\ge2$ and the fractional version hold};\\[2mm]
k\ge3:
& \text{every finite congestion level $b\ge1$ and the fractional}\\[-1pt]
& \text{version fail}.
\end{array}
\]

The shortest bad cycles form a rigid companion to unrestricted universality.  Put
\[
\ell_k=
\begin{cases}
 k+1,&k\text{ even},\\
 k+2,&k\text{ odd}.
\end{cases}
\]
For even $k$, let $W_k=K_{k+1}$.  For odd $k$, obtain $W_k$ from $K_{k+1}$ by adding one vertex adjacent to two clique vertices.  We prove that an odd cycle of the minimum possible length $\ell_k$ is $k$-bad exactly when its span contains $W_k$.  Every $W_k$ contains a $K_{k+1}$, whereas the universal realization graphs have clique number $k$; the arbitrary-clutter phenomenon therefore occurs entirely beyond the shortest layer.

Section~\ref{sec:parameters} fixes the packing notation.  Section~\ref{sec:realization} proves the realization theorem, Section~\ref{sec:sharp} derives its sharp consequences, and Section~\ref{sec:shortest} identifies the shortest witness.  Section~\ref{sec:bounded} records the bounded-certificate outlook, reducing it to finitely many parity-linkage families with a moving real copy of a fixed critical core.

\section{Packing parameters}\label{sec:parameters}

All graphs are finite and simple, and all hypergraphs are finite with nonempty hyperedges.  For a nonempty hypergraph $\calH$, its \emph{support} is
\[
V(\calH)=\bigcup_{e\in E(\calH)}e.
\]
Throughout, clutters are taken on their support.  Removing isolated ground elements does not change $\tau(\calH)$, $\nu_b^{\mathbb Z}(\calH)$, or $\nu^*(\calH)$.  For a family $\calF$ of nonempty subsets of a ground set $V$, write
\[
\tau(\calF)=\min\{|X|:X\subseteq V,\ X\cap F\ne\emptyset\text{ for every }F\in\calF\}.
\]
For $b\in\mathbb N$, define the integer $b$-packing number
\[
\nu_b^{\mathbb Z}(\calF)
 =\max\left\{\sum_{F\in\calF}z_F:
 z_F\in\mathbb Z_{\ge0},\ 
 \sum_{F\ni v}z_F\le b\ \text{for every }v\in V
 \right\}.
\]
Thus parallel use of the same incidence set is allowed by the integer variable $z_F$.  This is the correct trace parameter when a graph contains several distinct cycles with the same terminal trace.  For $b=1$, it is the ordinary matching number.

The fractional packing number is
\[
\nu^*(\calF)
 =\max\left\{\sum_{F\in\calF}y_F:
 y_F\ge0,\ 
 \sum_{F\ni v}y_F\le1\ \text{for every }v\in V
 \right\}.
\]
By linear-programming duality, this equals the minimum weight of a fractional transversal.

For a family $\calC$ of cycles, let $\nu^{1/b}(\calC)$ be the maximum cardinality of a set of distinct members of $\calC$ in which every vertex occurs at most $b$ times, and put $\nu(\calC)=\nu^{1/1}(\calC)$.  Let $\nu^*(\calC)$ denote the fractional packing number of the cycle-vertex hypergraph.

For the bad-cycle family $\calC_k(G)$, we abbreviate
\[
\tau_k(G)=\tau\bigl(\{V(C):C\in\calC_k(G)\}\bigr)
\]
and write $\nu_k^{1/b}(G)$ for the maximum cardinality of a set of \emph{distinct} cycles from $\calC_k(G)$ in which every vertex occurs at most $b$ times.  We write $\nu_k^*(G)$ for the fractional packing number of the cycle-vertex hypergraph.  The notation agrees with the usual integral case $b=1$ and the half-integral case $b=2$.

For fixed $k$ and $b$, we say that the $k$-bad odd cycles have the \emph{$1/b$-integral Erd\H{o}s--P\'{o}sa property} if there is a nondecreasing function $f\colon\mathbb N_0\to\mathbb N_0$ such that
\[
\tau_k(G)\le f\bigl(\nu_k^{1/b}(G)\bigr)
\]
for every graph $G$.  They have the \emph{fractional Erd\H{o}s--P\'{o}sa property} if there is a nondecreasing function $f\colon\mathbb N_0\to\mathbb N_0$ such that
\[
\tau_k(G)\le f\bigl(\lceil\nu_k^*(G)\rceil\bigr)
\]
for every graph $G$.

If $T\subseteq V(G)$ and $C$ is a cycle, set
\[
\tr_T(C)=V(C)\cap T.
\]
For a family of cycles $\calC$, let
\[
\tr_T(\calC)=\{\tr_T(C):C\in\calC\}.
\]

\section{The hypergraph realization theorem}\label{sec:realization}

We first build a vertex-critical graph of any sufficiently large order with the parity needed by the odd frame.  We call a graph $H$ \emph{vertex-$q$-critical} if $\chi(H)=q$ and $\chi(H-v)=q-1$ for every $v\in V(H)$.

\begin{lemma}[Vertex-critical padding]\label{lem:critical-padding}
Let $k\ge3$ and let $r\ge k$ satisfy $r\equiv k\pmod2$.  Define
\[
A_{k,r}=
\begin{cases}
C_r,&k=3,\\
K_{k-3}\vee C_{r-k+3},&k\ge4.
\end{cases}
\]
Then $A_{k,r}$ is a vertex-$k$-critical graph on $r$ vertices, and $\omega(A_{k,r})\le k$.
\end{lemma}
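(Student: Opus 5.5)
We verify the three claims (order, vertex-$k$-criticality, clique bound) directly from the join structure, treating $k=3$ separately.

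\emph{Order and well-definedness.} For $k=3$ the graph is $C_r$ with $r\ge3$ odd, since $r\equiv k\pmod 2$. For $k\ge4$ put $m=r-k+3$. Then $m\ge3$, and $m\equiv r-k+1\equiv 1\pmod 2$, so $C_m$ is an odd cycle. Hence $A_{k,r}$ has $(k-3)+m=r$ vertices.

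\emph{Chromatic number.} We use the standard identity $\chi(H_1\vee H_2)=\chi(H_1)+\chi(H_2)$: in a join, no colour class can meet both sides, and colourings of the two sides with disjoint palettes combine. With $\chi(C_m)=3$ for odd $m$, this gives
\[
\chi(A_{k,r})=(k-3)+3=k.
\]
The case $k=3$ is just $\chi(C_r)=3$.

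\emph{Vertex-criticality.} Let $v$ be a vertex and consider two cases.
\begin{itemize}
\item If $v$ lies in the clique part, then $A_{k,r}-v=K_{k-4}\vee C_m$, whose chromatic number is $k-1$ by the same identity.
\item If $v$ lies on the cycle, then $C_m-v$ is a path on $m-1\ge2$ vertices, which is bipartite with at least one edge. Hence $\chi(A_{k,r}-v)=(k-3)+2=k-1$.
\end{itemize}
For $k=3$, deleting a vertex of an odd cycle leaves a path, which is $2$-colourable.

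\emph{Clique number.} A clique in a join is the union of a clique from each side. Here $\omega(K_{k-3})=k-3$. Also $\omega(C_m)=3$ when $m=3$, and $\omega(C_m)=2$ when $m\ge5$. Therefore $\omega(A_{k,r})\le k$, with equality exactly when $r=k$, in which case $A_{k,k}=K_k$.

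The only step needing any care is the parity bookkeeping that makes $C_{r-k+3}$ odd. Everything else follows from the additivity of $\chi$ under joins.
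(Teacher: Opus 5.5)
Your proposal is correct and follows the paper's proof: both use additivity of $\chi$ under joins, the fact that deleting a vertex lowers the chromatic number only in the factor containing it, and the clique bound $(k-3)+3=k$. You also spell out the parity check that $C_{r-k+3}$ is an odd cycle of length at least three, which the paper leaves implicit.
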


\begin{proof}
If $k=3$, then $r$ is odd and $A_{3,r}=C_r$ is vertex-$3$-critical, with clique number at most three.  Suppose that $k\ge4$.  The cycle $C_{r-k+3}$ has odd length at least three and is vertex-$3$-critical, while $K_{k-3}$ is vertex-$(k-3)$-critical.  The join of a vertex-$p$-critical graph and a vertex-$q$-critical graph is vertex-$(p+q)$-critical: chromatic numbers add under joins, and deleting a vertex lowers the chromatic number in the factor containing it.  Hence $A_{k,r}$ is vertex-$k$-critical.  Its clique number is at most $(k-3)+3=k$.
\end{proof}

We now state the realization theorem in its full form.

\begin{theorem}[Terminal-trace realization and parameter preservation]\label{thm:realization}
Fix $k\ge3$.  Let $\calH$ be a finite nonempty clutter, set
\[
T=V(\calH):=\bigcup_{e\in E(\calH)}e,
\]
and let $M\ge1$.  There is a $2$-connected graph $G=G(k,\calH,M)$ containing $T$ as an independent set such that the following hold.

\begin{enumerate}[label=\textup{(\roman*)},leftmargin=*]
\item \label{item:chromatic-host}
$G$ is $2$-connected, $\chi(G)=k+1$, $\omega(G)=k$, and $|E(G)|\le k|V(G)|$.

\item \label{item:trace-colouring}
For every $X\subseteq V(G)$, if $X\cap T$ contains no edge of $\calH$, then $G[X]$ is $k$-colourable.

\item \label{item:designated-cycles}
For every $e\in E(\calH)$ there are $M$ distinct $k$-bad odd cycles
\[
C_{e,1},\ldots,C_{e,M}
\]
with $\tr_T(C_{e,j})=e$.  The sets $V(C_{e,j})\setminus T$, over all pairs $(e,j)$, are pairwise disjoint.
\end{enumerate}

Consequently,
\[
E(\calH)=\min_{\subseteq}\tr_T(\calC_k(G)),
\]
where $\min_{\subseteq}$ denotes the inclusion-minimal members.  Moreover,
\[
\nu_k^*(G)=\nu^*(\calH).
\]
If $M\ge\tau(\calH)$, then
\[
\tau_k(G)=\tau(\calH),
\]
and, for every integer $b\ge1$, if $M\ge b$, then
\[
\nu_k^{1/b}(G)=\nu_b^{\mathbb Z}(\calH).
\]
In particular, if $B\ge1$ is prescribed and $M\ge\max\{\tau(\calH),B\}$, then the same graph $G$ satisfies
\[
\tau_k(G)=\tau(\calH),\qquad
\nu_k^*(G)=\nu^*(\calH),\qquad
\nu_k^{1/c}(G)=\nu_c^{\mathbb Z}(\calH)\quad(1\le c\le B).
\]
\end{theorem}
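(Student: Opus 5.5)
The plan has two parts: build the gadget graph, then deduce the parameter identities from (i)--(iii) by a trace-transfer argument.

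\emph{Deduction of the consequences from (i)--(iii).} By (ii), if $C$ is $k$-bad then $G[V(C)]$ is not $k$-colourable, so $\tr_T(C)$ contains an edge of $\calH$. By (iii), every edge $e$ is itself a trace of a bad cycle. Since $\calH$ is a clutter, the minimal traces are exactly $E(\calH)$.

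For $\tau$: a transversal $S$ of $\calH$ meets every bad cycle, because each bad cycle's trace contains an edge. Hence $\tau_k(G)\le\tau(\calH)$. Conversely, let $X$ hit all bad cycles with $|X|<\tau(\calH)\le M$. Then some edge $e$ is missed by $X\cap T$. The $M$ cycles $C_{e,j}$ have pairwise disjoint non-terminal parts, so $X$ misses some $C_{e,j}$ entirely, a contradiction.

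For $\nu^*$: given an optimal fractional packing $y$ of $\calH$, put weight $y_e$ on $C_{e,1}$. Each terminal load equals its load in $\calH$, and non-terminal vertices lie in a single designated cycle, so their load is at most $y_e\le 1$. Thus $\nu_k^*(G)\ge\nu^*(\calH)$. Conversely, map each bad cycle $C$ with weight $y_C$ to one edge $e\subseteq\tr_T(C)$. Terminal loads do not increase, so we get a fractional packing of $\calH$ of the same value. The case $\nu_k^{1/b}$ is analogous: an integer $b$-packing $z$ has $z_e\le b\le M$, so we use $C_{e,1},\dots,C_{e,z_e}$. These are distinct cycles, and each non-terminal vertex is used at most once. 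Conversely, projecting the distinct cycles of a packing to chosen edges gives a multiset packing, which is exactly what $\nu_b^{\mathbb Z}$ counts.

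\emph{Construction.} For each $e=\{t_1,\dots,t_s\}$ and each $j\le M$, build a private gadget $D_{e,j}$ from Lemma~\ref{lem:critical-padding}. Take a vertex-$k$-critical graph $A=A_{k,r}$ with $r$ large and of the right parity. Attach the terminals of $e$ so that every terminal has neighbours in a "colour-forcing core". The core is designed to be $k$-colourable in any proper subset but to force a conflict when all of $e$ is present, via a private odd path (the "odd frame") through $t_1,\dots,t_s$ and a Hamiltonian-type path through $A$. The result is an odd cycle $C_{e,j}$ whose span contains $A$ plus an extra vertex forced adjacent to all $k$ colour classes, so $\chi\ge k+1$.

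More concretely, I would make each terminal $t_i$ adjacent to vertices of the core such that in every $k$-colouring of the core the terminals' neighbourhoods jointly see all $k$ colours. Repairability then does the rest: deleting the gadget's dependence on any one terminal allows recolouring one vertex. Padding $r\equiv k\pmod 2$ is chosen so that the frame cycle through the core and $e$ has odd length. Sparsity $|E|\le k|V|$ and $\omega=k$ follow since $A$ has clique number at most $k$, its edge count is small, and terminals are attached independently. Making $G$ $2$-connected is done by joining gadgets only through terminals: each gadget is $2$-connected and meets $T$ in at least two vertices. If some edge is a singleton, I would add a private connector path, checked not to create bad cycles.

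\emph{Main obstacle: property (ii).} Cycles may wander across several gadgets through shared terminals, so (ii) must be proved for arbitrary $X$ and not just for designated cycles. The plan is to colour $G[X]$ gadget by gadget. First colour all terminals with one common colour, say colour $1$; this is allowed since $T$ is independent. Then, for each gadget $D_{e,j}$, the set $X\cap T$ misses some $t\in e$. The gadget minus its dependence on $t$ must be $k$-colourable with every present terminal coloured $1$. This is the one-vertex-repairable property, and it must be checked as a precoloured-extension lemma for the single gadget. Because gadgets interact only at $T$, these colourings combine. The same argument with $X=V(G)$ minus one terminal per edge, together with a $(k+1)$-th colour, gives $\chi(G)=k+1$. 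I expect designing the core so that this extension lemma holds while still forcing $k+1$ colours on the full cycle span to be the delicate step.
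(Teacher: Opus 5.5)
Your derivation of the consequences from (i)--(iii) is correct and is essentially the paper's argument. Your deduction of $\chi(G)\le k+1$ from (ii), by deleting a transversal $S\subseteq T$ and giving the independent set $S$ a fresh colour, is a valid shortcut.

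The genuine gap is that you never construct the colour-forcing core, and that core is the whole content of (i)--(iii). The one concrete criterion you give, that the terminals' neighbourhoods jointly see all $k$ colours, forces nothing. A genuine $k$-colouring may give different terminals different colours: say $t_1$ sees only colour $1$ and takes colour $2$, while $t_2$ sees colours $2,\dots,k$ and takes colour $1$. So your criterion would block a $k$-colouring only if the gadget itself confined all terminals of $e$ to one colour class. That confinement is also what your extension lemma (all present terminals coloured $1$) presupposes.

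The missing idea is to build that confinement in:
\begin{itemize}
\item pad $e$ with private vertices to a set $U_{e,j}$ of size $r=|V(A_{k,r})|$;
\item join a private $K_{k-1}$ completely to $U_{e,j}$;
\item join $U_{e,j}$ by a perfect matching $ua_u$ to a private copy of $A_{k,r}$.
\end{itemize}
In any $k$-colouring, the $K_{k-1}$ pushes all of $U_{e,j}$ into the single remaining colour. The matching then bans that colour on the whole copy of $A$, and $A$ is not $(k-1)$-colourable. If some $t\in e$ is absent, $a_t$ may take the common colour, and vertex-criticality gives a $(k-1)$-colouring of $A-a_t$. This is precisely your precoloured-extension lemma.

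You also misplace the parity condition. $r\equiv k\pmod 2$ is what makes $C_{r-k+3}$ odd inside $A_{k,r}$. The frame is made odd independently, by threading all core vertices (including the clique and the padding, not only $e$ and $A$) with private paths of length two plus one of length three. This also makes extending colourings over the frame trivial when $k\ge3$.

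Your $2$-connectivity plan fails beyond the singleton case. Gluing $2$-connected gadgets only at shared terminals leaves $G$ disconnected when $\calH$ has two disjoint edges. It also leaves a cutvertex when edges share only one terminal: for $E(\calH)=\{\{1,2\},\{2,3\}\}$, vertex $2$ separates the two groups of gadgets. A private global connector is needed; the paper attaches every gadget to a central cycle $R$ by two length-two paths from distinct private frame vertices. With a connector present, (ii), $\omega(G)=k$ and $|E(G)|\le k|V(G)|$ must all be rechecked. The last two, which you only assert, can anyway be verified only once the gadget is specified. In the paper's gadget, $Q_{e,j}\cup\{u\}$ is a $K_k$ and no clique exceeds $k$.
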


\begin{proof}
Choose an integer
\[
r\ge \max\{k,\max_{e\in E(\calH)}|e|\}
\qquad\text{with}\qquad r\equiv k\pmod2,
\]
and put $A=A_{k,r}$.

For each pair $(e,j)\in E(\calH)\times[M]$, take a private set $P_{e,j}$ of size $r-|e|$ and put
\[
U_{e,j}=e\cup P_{e,j}.
\]
Thus $|U_{e,j}|=r$, and only the vertices of $e$ are shared with other gadgets.  Take a private copy $A_{e,j}$ of $A$, whose vertices are denoted by $a_u$ for $u\in U_{e,j}$, and a private clique $Q_{e,j}\cong K_{k-1}$.  Add all edges between $U_{e,j}$ and $Q_{e,j}$, and add the matching edges
\[
ua_u\qquad(u\in U_{e,j}).
\]
Let $F_{e,j}$ be the resulting graph on
\[
U_{e,j}\cup V(A_{e,j})\cup V(Q_{e,j}).
\]

We first note that
\[
\chi(F_{e,j})=k+1. \tag{3.1}\label{eq:core-chromatic}
\]
Indeed, in a hypothetical $k$-colouring, the clique $Q_{e,j}$ uses $k-1$ colours.  Every vertex of $U_{e,j}$ is adjacent to all of $Q_{e,j}$, so all vertices of $U_{e,j}$ are forced to use the one remaining colour.  The matching edges $ua_u$ then forbid that colour on every vertex of $A_{e,j}$, giving a $(k-1)$-colouring of the $k$-chromatic graph $A$, a contradiction.  Conversely, colour $Q_{e,j}$ with colours $1,\ldots,k-1$, colour $U_{e,j}$ with colour $k$, and colour $A_{e,j}$ with the $k$ colours $1,\ldots,k-1,k+1$.

We place a private odd frame through all vertices of $F_{e,j}$.  Order these vertices as
\[
z_1,z_2,\ldots,z_m.
\]
For $1\le i<m$, add a private path of length two from $z_i$ to $z_{i+1}$, and add a private path of length three from $z_m$ to $z_1$.  All internal vertices of these paths are new.  Their union is an odd cycle $C_{e,j}$ of length $2m+1$ containing every vertex of $F_{e,j}$.  Let $D_{e,j}$ be the graph formed by $F_{e,j}$ and this private frame.  To make the realization $2$-connected without changing any designated cycle, take a new cycle $R$ of length at least four.  For every $(e,j)$, choose distinct internal vertices $x_{e,j},y_{e,j}$ of the private frame paths (and hence vertices private to this gadget) and distinct vertices $x'_{e,j},y'_{e,j}$ of $R$, and join $x_{e,j}$ to $x'_{e,j}$ and $y_{e,j}$ to $y'_{e,j}$ by two internally vertex-disjoint paths of length two.  All internal vertices of these attachment paths are new, and $R$ is chosen long enough that all attachment vertices on it may be distinct.  Let $G$ be the resulting graph, with the terminal set $T$ shared and every other gadget vertex private.  Figure~\ref{fig:realization-gadget} summarizes one gadget and its two attachments to the central cycle.

\begin{figure}[t]
\centering
\resizebox{\linewidth}{!}{%
\begin{tikzpicture}[
  box/.style={draw,rounded corners,align=center,minimum height=10mm,inner sep=5pt},
  every node/.style={font=\small}
]
\node[box,minimum width=34mm] (U) at (0,0)
  {$U_{e,j}=e\cup P_{e,j}$\\[-1pt]\scriptsize terminals and private padding};
\node[box,minimum width=28mm] (Q) at (0,2.05)
  {$Q_{e,j}\cong K_{k-1}$};
\node[box,minimum width=31mm] (A) at (4.35,0)
  {$A_{e,j}\cong A_{k,r}$};

\draw[very thick] (Q.south) --
  node[right,fill=white,inner sep=1pt,font=\scriptsize]{complete join} (U.north);
\draw[dashed,very thick] (U.east) --
  node[above,fill=white,inner sep=1pt,font=\scriptsize]{matching $ua_u$} (A.west);

\node[draw,densely dotted,rounded corners,fit=(U)(Q)(A),inner sep=7mm,
      label={[font=\small]below:$F_{e,j}$}] (F) {};
\node[draw,rounded corners=18pt,fit=(F),inner xsep=9mm,inner ysep=7mm,
      label={[font=\small]above:private odd frame $C_{e,j}$}] (frame) {};

\node[draw,ellipse,minimum width=30mm,minimum height=13mm,right=38mm of frame] (R)
  {central cycle $R$};
\coordinate (x) at ($(frame.east)+(0,0.65)$);
\coordinate (y) at ($(frame.east)+(0,-0.65)$);
\coordinate (xp) at ($(R.west)+(0,0.30)$);
\coordinate (yp) at ($(R.west)+(0,-0.30)$);
\node[circle,fill,inner sep=1.4pt] at (x) {};
\node[circle,fill,inner sep=1.4pt] at (y) {};
\node[circle,fill,inner sep=1.4pt] at (xp) {};
\node[circle,fill,inner sep=1.4pt] at (yp) {};
\draw[thick] (x) -- (xp);
\draw[thick] (y) -- (yp);
\node[circle,draw,fill=white,inner sep=1.2pt] at ($(x)!0.5!(xp)$) {};
\node[circle,draw,fill=white,inner sep=1.2pt] at ($(y)!0.5!(yp)$) {};
\node[align=center,font=\scriptsize,below=4mm of R]
  {two internally disjoint\\length-two attachment paths};
\end{tikzpicture}%
}
\caption{Schematic of one realization gadget.  Individual edges inside the complete join, the critical graph, and the odd frame are suppressed.  Only the terminals in $e\subseteq U_{e,j}$ are shared with other gadgets.}
\label{fig:realization-gadget}
\end{figure}

The displayed $(k+1)$-colouring of each $F_{e,j}$ is consistent across gadgets because every terminal receives colour $k$.  A colouring of the two endpoints of a path of length two or three extends over the path whenever at least three colours are available.  Colour $R$ properly and extend over the attachment paths.  Hence the colourings extend to a $(k+1)$-colouring of $G$.  Equation~\eqref{eq:core-chromatic} gives the reverse inequality, proving $\chi(G)=k+1$.

A clique in $F_{e,j}$ contains at most one vertex of $U_{e,j}$, no vertex of both $Q_{e,j}$ and $A_{e,j}$, and cannot use a matching edge $ua_u$ together with a third core vertex.  Thus its size is at most $k$, while $Q_{e,j}\cup\{u\}$ is a $K_k$ for every $u\in U_{e,j}$.  We also rule out cliques spanning several gadgets.  The terminal set $T$ is independent, and private vertices belonging to different gadgets are never adjacent.  Hence a clique cannot contain private vertices from two different gadgets, and every clique of size at least three that meets a core or frame is contained in one local gadget configuration.  An internal frame vertex has degree two inside its private frame and cannot enlarge a clique beyond size three.  The central cycle and the subdivided attachment paths create no clique of size three.  Since $k\ge3$, this proves $\omega(G)=k$.

We verify $2$-connectivity explicitly.  Each $D_{e,j}$ contains the spanning cycle $C_{e,j}$, so $D_{e,j}-v$ is connected whenever $v\in V(D_{e,j})$; similarly, $R-v$ is connected whenever $v\in V(R)$.  Fix $v\in V(G)$.

If $v\in V(R)$, then for each gadget at most one of its two attachment paths loses its endpoint on $R$, because all attachment vertices on $R$ are distinct.  The other attachment remains, and the internal vertex left on a broken attachment path stays connected to the gadget side, hence through the surviving attachment to $R-v$.  If $v$ is an internal vertex of an attachment path, the two remaining ends of that broken path lie in the gadget and in $R$, respectively, while the second attachment still joins those two connected sides.  If $v$ is a nonterminal vertex of some $D_{e,j}$, then $D_{e,j}-v$ is connected; deleting $v$ can destroy at most one attachment because the two private attachment endpoints $x_{e,j},y_{e,j}$ are distinct, and any remaining internal vertex of the destroyed path stays attached to $R$.  Finally, if $v\in T$, then every gadget containing $v$ remains connected after deleting $v$, and both of its attachments survive because their endpoints were chosen among private internal frame vertices.  Gadgets not containing $v$ are unaffected.

In every case, all remaining gadget vertices lie in a connected subgraph meeting $R$ or $R-v$, and hence $G-v$ is connected.  Thus $G$ is $2$-connected.

The realization is also sparse.  Put $d=M|E(\calH)|$ and recall that
\[
m=|V(F_{e,j})|=2r+k-1.
\]
The edge count of the core is obtained as follows.  First,
\[
\begin{aligned}
|E(A_{e,j})|
&=\binom{k-3}{2}+(r-k+3)+(k-3)(r-k+3)\\
&=(k-2)r-\frac{k(k-3)}{2},
\end{aligned}
\]
where for $k=3$ the first and third terms vanish and $A_{3,r}=C_r$.  Therefore
\[
\begin{aligned}
|E(F_{e,j})|
&=|E(A_{e,j})|+\binom{k-1}{2}+(k-1)r+r\\
&=(2k-2)r+1.
\end{aligned}
\]
Its frame and two attachment paths add $4r+2k+3$ edges.  There are $2d$ attachment vertices on the central cycle, so $R$ may be chosen with length $\max\{4,2d\}\le4d$.  Hence
\[
|E(G)|\le d\bigl((2k+2)r+2k+8\bigr).
\]
For a gadget indexed by $(e,j)$, the exact number of private vertices is
\[
(r-|e|)+r+(k-1)+(m+1)+2
   =4r-|e|+2k+1
   \ge 3r+2k+1.
\]
Here the five terms count, respectively, the private vertices of $U_{e,j}$, the copy of $A$, the clique $Q_{e,j}$, the internal frame vertices, and the two attachment-path interiors.  Therefore
\[
|V(G)|\ge d(3r+2k+1).
\]
Since $k\ge3$ and $r\ge k$,
\[
(2k+2)r+2k+8\le k(3r+2k+1),
\]
and therefore $|E(G)|\le k|V(G)|$.  This completes~\ref{item:chromatic-host}.

We prove the stronger colouring assertion~\ref{item:trace-colouring}.  Let $X\subseteq V(G)$ and suppose that $X\cap T$ contains no edge of $\calH$.  Colour every vertex of $X\cap T$ with colour $k$.  Consider a gadget indexed by $(e,j)$.  Choose
\[
t_e\in e\setminus X,
\]
which is possible because $e\nsubseteq X\cap T$.  Colour every present vertex of $U_{e,j}$ with colour $k$ and colour the present vertices of $Q_{e,j}$ with their fixed colours in $[k-1]$.  Since $A$ is vertex-$k$-critical, $A-a_{t_e}$ has a $(k-1)$-colouring.  Use such a colouring on the present vertices of $A_{e,j}-a_{t_e}$; if $a_{t_e}\in X$, give it colour $k$.  The only matching edge incident with $a_{t_e}$ has other end $t_e\notin X$, and every other present matching edge joins colour $k$ to a colour in $[k-1]$.  This properly colours the core induced by $X$.

The intersection of $X$ with each private frame path is a disjoint union of paths, and in each component at most the two ends of the original frame path have already been coloured.  On a frame path of length two, a present internal vertex can be assigned a colour different from the colours of its two present neighbours.  On a frame path of length three, colour the two present internal vertices successively; at each step at most two colours are forbidden.  Missing boundary or internal vertices only make the extension easier.  Since $k\ge3$, the core colouring therefore extends independently over every present part of every frame path.  Next colour $R[X\cap V(R)]$, which is a subgraph of a cycle and hence is $3$-colourable, and apply the same length-two extension to every present part of an attachment path.  The resulting colourings agree on shared terminals and give a $k$-colouring of $G[X]$.

Each $C_{e,j}$ is odd and its span contains $F_{e,j}$, so by~\eqref{eq:core-chromatic} it is $k$-bad.  Its only terminals are the vertices of $e$, and all nonterminal vertices are private to $(e,j)$.  This proves~\ref{item:designated-cycles}.

If $C$ is any $k$-bad odd cycle, then $G[V(C)]$ is not $k$-colourable.  By~\ref{item:trace-colouring}, its trace contains an edge of $\calH$.  Conversely, each $e\in E(\calH)$ is the trace of a designated bad cycle.  Since $\calH$ is a clutter, the inclusion-minimal traces are exactly its edges.

It remains to prove parameter preservation.  Let $Z\subseteq T$ be a transversal of $\calH$.  Every bad cycle has a trace containing an edge of $\calH$, and hence meets $Z$.  Therefore
\[
\tau_k(G)\le\tau(\calH).
\]
For the reverse inequality, suppose $M\ge\tau(\calH)$ and let $X\subseteq V(G)$ have size smaller than $\tau(\calH)$.  Then $X\cap T$ misses some edge $e\in E(\calH)$.  The $M$ cycles $C_{e,1},\ldots,C_{e,M}$ avoid $X\cap T$ and have pairwise disjoint nonterminal parts.  Meeting all of them requires at least $M\ge\tau(\calH)$ vertices, so $X$ is not a transversal.  Hence $\tau_k(G)=\tau(\calH)$.

Now fix $b\le M$.  Given a $1/b$-integral packing of bad cycles, choose for each packed cycle one edge of $\calH$ contained in its terminal trace.  If an edge $e$ is chosen $z_e$ times, then
\[
\sum_{e\ni t}z_e\le b\qquad(t\in T),
\]
so $(z_e)$ is a feasible integer $b$-packing of $\calH$.  Thus
\[
\nu_k^{1/b}(G)\le\nu_b^{\mathbb Z}(\calH).
\]
Conversely, let $(z_e)$ be a feasible integer $b$-packing.  Since every edge is nonempty, $z_e\le b\le M$.  For each $e$, choose $z_e$ distinct designated cycles among $C_{e,1},\ldots,C_{e,M}$.  Terminal loads are exactly the hypergraph loads and every nonterminal vertex is used at most once.  This gives a $1/b$-integral packing of the same cardinality, proving equality.

Finally, map any fractional packing of bad cycles to $\calH$ by choosing one contained trace edge for each cycle and summing the corresponding weights.  Terminal loads show that the resulting hyperedge weights form a feasible fractional packing, so
\[
\nu_k^*(G)\le\nu^*(\calH).
\]
Conversely, let $(y_e)$ be a fractional packing of $\calH$.  Put weight $y_e$ on the single designated cycle $C_{e,1}$.  Terminal loads are feasible, and $y_e\le1$ because $e$ is nonempty, so every private vertex also has load at most one.  Hence
\[
\nu_k^*(G)\ge\nu^*(\calH),
\]
completing the proof.
\end{proof}

\begin{remark}[The reusable mechanism]\label{rem:mechanism}
The proof separates into two independent devices.  The \emph{colour-forcing core} turns a terminal set $e$ into a $(k+1)$-chromatic obstruction while deleting any one terminal from $e$ restores a $k$-colouring.  The \emph{odd frame} converts that obstruction into a cycle without changing its terminal trace.  The converse colouring in Theorem~\ref{thm:realization}\ref{item:trace-colouring} upgrades the designated cycles to an exact realization of the complete bad-cycle trace system.
\end{remark}

\begin{remark}[Size and exact reductions]
Let $r$ be the padded rank used in the proof.  The construction has $O_k(M|E(\calH)|r)$ vertices and is polynomial in the explicit incidence description of $\calH$.  Taking $M\ge |T|$ therefore gives an exact optimum-preserving reduction from \textsc{Hitting Set} to the transversal problem for $k$-bad odd cycles on $2$-connected $(k+1)$-chromatic $K_{k+1}$-free graphs.  If a congestion bound $b$ is also prescribed, taking $M\ge\max\{|T|,b\}$ preserves its integer packing optimum and the fractional optimum at the same time.
\end{remark}

\section{Sharp global consequences}\label{sec:sharp}

Let $\binom{[n]}r$ denote the complete $r$-uniform hypergraph on $[n]$.

\begin{lemma}[Complete uniform hypergraphs]\label{lem:complete-hypergraph}
Let $1\le r\le n$, and put $\calH=\binom{[n]}r$.  Then
\[
\tau(\calH)=n-r+1
\qquad\text{and}\qquad
\nu^*(\calH)=\frac nr.
\]
Moreover, if $c\ge1$ and
\[
r>c(n-r), \tag{4.1}\label{eq:intersection-condition}
\]
then
\[
\nu_c^{\mathbb Z}(\calH)=c.
\]
\end{lemma}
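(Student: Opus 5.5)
We verify the three identities separately, each time pairing an explicit construction with a matching lower or upper bound.

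\emph{Transversal number.} A set $X\subseteq[n]$ misses some $r$-set exactly when $|[n]\setminus X|\ge r$. Hence $X$ is a transversal if and only if $|X|\ge n-r+1$, which gives $\tau(\calH)=n-r+1$.

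\emph{Fractional packing number.} For the upper bound, the uniform fractional transversal $w\equiv 1/r$ is feasible, because every edge has exactly $r$ vertices. It has total weight $n/r$. For the lower bound, put $y_e=1/\binom{n-1}{r-1}$ on every edge. Each vertex lies in $\binom{n-1}{r-1}$ edges, so its load is exactly $1$. The total weight is
\[
\binom nr\Big/\binom{n-1}{r-1}=\frac nr.
\]
By linear-programming duality, as recorded in Section~\ref{sec:parameters}, $\nu^*(\calH)=n/r$.

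\emph{Integer $c$-packing number.} For the lower bound, take $z_e=c$ on a single edge $e$. This gives $\nu_c^{\mathbb Z}(\calH)\ge c$.

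For the upper bound, suppose a feasible integer $c$-packing has total $s=\sum_e z_e$. List its edges with multiplicity as $e_1,\dots,e_s$. Count incidences in two ways:
\[
rs=\sum_{v}\#\{i:v\in e_i\}\le cn .
\]
This gives $s\le cn/r$. Condition~\eqref{eq:intersection-condition} is equivalent to $r(c+1)>cn$, that is, $cn/r<c+1$. Therefore $s\le c$.

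This counting step is the only one that needs care. First, the inequality $s\le cn/r$ must be applied to the multiset of edges, which is legitimate because parallel copies are allowed via the integer variables $z_e$. Second, the integrality of $s$ is what upgrades $s<c+1$ to $s\le c$.

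Alternatively, one could argue that any $c+1$ edges share a common vertex: their complements have total size at most $(c+1)(n-r)<n$ when $r>c(n-r)$ (using $c(n-r)+ (n-r) <r+(n-r)=n$). That common vertex would then have load $c+1$. The double-counting argument is cleaner, and I would present that one.
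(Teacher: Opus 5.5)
Your proof is correct. The transversal and fractional parts match the paper's; your fractional transversal $w\equiv 1/r$ is just the dual form of the paper's double count $r\sum_e y_e\le n$.

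The one real difference is the upper bound $\nu_c^{\mathbb Z}(\calH)\le c$:
\begin{itemize}
\item You double count incidences over the whole multiset. This gives $\nu_c^{\mathbb Z}(\calH)\le\lfloor cn/r\rfloor$, which in effect is $\nu_c^{\mathbb Z}\le c\,\nu^*$, since dividing an integer $c$-packing by $c$ yields a fractional packing. You then read condition~\eqref{eq:intersection-condition} as $cn/r<c+1$.
\item The paper instead fixes $c+1$ edge occurrences and counts complements to show that they share a vertex. This is the alternative you sketched at the end.
\end{itemize}
With exactly $c+1$ occurrences, the two arguments are the same pigeonhole in complementary form. One uses total load $(c+1)r>cn$, the other total complement size $(c+1)(n-r)<n$, and a vertex of load $c+1$ is precisely a common vertex. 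The two hypotheses are therefore identical.

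Your version reuses the fractional computation and gives a bound that holds even without \eqref{eq:intersection-condition}. The paper's version exhibits the overloaded vertex explicitly and records the structural fact that any $c+1$ edges of $\calH$ have a common point.
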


\begin{proof}
A set meets every $r$-subset of $[n]$ exactly when its complement has size at most $r-1$, giving $\tau(\calH)=n-r+1$.

For every fractional packing $(y_e)$,
\[
r\sum_e y_e
 =\sum_{v\in[n]}\sum_{e\ni v}y_e
 \le n,
\]
so $\nu^*(\calH)\le n/r$.  Equality is attained by assigning weight
$\binom{n-1}{r-1}^{-1}$ to every edge.

The lower bound $\nu_c^{\mathbb Z}(\calH)\ge c$ follows by assigning multiplicity $c$ to one edge.  Suppose that $c+1$ edge occurrences are selected.  Their common intersection has size at least
\[
n-(c+1)(n-r)=(c+1)r-cn=r-c(n-r)>0.
\]
A vertex in the common intersection has load at least $c+1$, contradicting feasibility.  Hence $\nu_c^{\mathbb Z}(\calH)=c$.
\end{proof}

\begin{theorem}[Sharp simultaneous failure]\label{thm:sharp-failure}
Fix $k\ge3$.  For all integers $b,N\ge1$ and every $\varepsilon>0$, there is a $2$-connected graph $G$ such that
\[
\chi(G)=k+1,\qquad \omega(G)=k,\qquad |E(G)|\le k|V(G)|,
\]
and
\[
\tau_k(G)=N,
\qquad
\nu_k^{1/c}(G)=c\quad(1\le c\le b),
\qquad
1<\nu_k^*(G)<1+\varepsilon
\]
whenever $N\ge2$.  For $N=1$, the same statement holds with $\nu_k^*(G)=1$.
\end{theorem}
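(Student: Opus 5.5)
The plan is to apply Theorem~\ref{thm:realization} to a complete uniform hypergraph $\calH=\binom{[n]}{r}$ and read off the parameters from Lemma~\ref{lem:complete-hypergraph}. Write $r=n-N+1$, so that $\tau(\calH)=N$ and $\nu^*(\calH)=n/r$. We then want two things from $n$ (equivalently from $r=n-N+1$). The first is $r>b(n-r)=b(N-1)$. The second is $n/r=1+(N-1)/r<1+\varepsilon$. Both hold once $r$ is large, say $r>\max\{b(N-1),(N-1)/\varepsilon\}$. Since $r$ is free, set $n=r+N-1$, which ensures $1\le r\le n$.

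With these choices, condition~\eqref{eq:intersection-condition} holds for every $c\le b$, because $c(n-r)\le b(n-r)<r$. Lemma~\ref{lem:complete-hypergraph} therefore gives $\nu_c^{\mathbb Z}(\calH)=c$ for all $1\le c\le b$. It also gives $\tau(\calH)=N$ and $\nu^*(\calH)=n/r=1+(N-1)/r$. This value lies strictly between $1$ and $1+\varepsilon$ when $N\ge2$, and it equals $1$ when $N=1$.

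Next, $\calH$ is a nonempty clutter, since it is uniform and $r\ge1$, and its support is all of $[n]$. Apply Theorem~\ref{thm:realization} with $M\ge\max\{\tau(\calH),b\}=\max\{N,b\}$ and $B=b$. The resulting graph $G$ is $2$-connected and satisfies $\chi(G)=k+1$, $\omega(G)=k$ and $|E(G)|\le k|V(G)|$. It also satisfies $\tau_k(G)=N$, $\nu_k^*(G)=n/r$, and $\nu_k^{1/c}(G)=\nu_c^{\mathbb Z}(\calH)=c$ for $1\le c\le b$. This is exactly the claimed statement.

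There is no real obstacle here; the only care needed is bookkeeping. One point is the case $N=1$: there $r=n$, so $\calH$ has a single edge, and $\tau=1$, $\nu^*=1$, $\nu_c^{\mathbb Z}=c$ all hold directly, with~\eqref{eq:intersection-condition} reading $r>0$. The other is checking that the uniform choice of $r$ serves every $c\le b$ at once, which the monotonicity of $c(n-r)$ in $c$ handles.
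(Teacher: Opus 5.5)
Your proposal is correct and matches the paper's proof: you take $\calH=\binom{[n]}{r}$ with $n=r+N-1$ and $r>\max\{b(N-1),(N-1)/\varepsilon\}$, choose $M\ge\max\{b,N\}$, and read off the parameters via Lemma~\ref{lem:complete-hypergraph} and Theorem~\ref{thm:realization}. The paper additionally requires $r\equiv k\pmod 2$ and $r>k-1$; you are right to omit this, because the realization theorem accepts any finite nonempty clutter and picks its own padded rank internally.
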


\begin{proof}
Choose an integer $r\equiv k\pmod2$ such that
\[
r>\max\left\{b(N-1),\frac{N-1}{\varepsilon},k-1\right\},
\]
and put
\[
n=r+N-1,
\qquad
\calH=\binom{[n]}r.
\]
Choose $M\ge\max\{b,N\}$ and apply Theorem~\ref{thm:realization}.  Lemma~\ref{lem:complete-hypergraph} gives
\[
\tau_k(G)=n-r+1=N
\]
and, for every $1\le c\le b$,
\[
\nu_k^{1/c}(G)=\nu_c^{\mathbb Z}(\calH)=c.
\]
It also gives
\[
\nu_k^*(G)=\frac nr=1+\frac{N-1}{r}<1+\varepsilon.
\]
This value is larger than one exactly when $N\ge2$.

The graph is $2$-connected by Theorem~\ref{thm:realization}.
\end{proof}

\begin{corollary}[No bounded-congestion or fractional Erd\H{o}s--P\'{o}sa theorem]\label{cor:no-ep}
For every fixed $k\ge3$ and every finite $b\ge1$, the family of $k$-bad odd cycles has no $1/b$-integral Erd\H{o}s--P\'{o}sa property, even on $2$-connected $(k+1)$-chromatic $K_{k+1}$-free graphs of average degree at most $2k$.  It also has no fractional Erd\H{o}s--P\'{o}sa property on this class.
\end{corollary}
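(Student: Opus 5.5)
The plan is to argue by contradiction: assume some nondecreasing function $f$ witnesses the property, and feed Theorem~\ref{thm:sharp-failure} a target value $N$ exceeding what $f$ allows. First I check that the graphs it produces lie in the stated class. They are $2$-connected and satisfy $\chi(G)=k+1$ and $\omega(G)=k$, so they are $(k+1)$-chromatic and $K_{k+1}$-free. From $|E(G)|\le k|V(G)|$ the average degree $2|E(G)|/|V(G)|$ is at most $2k$. So every graph produced by Theorem~\ref{thm:sharp-failure} lies in the class named in the corollary.

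\emph{Integral case.} Fix $b\ge1$ and suppose $f$ witnesses the $1/b$-integral Erd\H{o}s--P\'{o}sa property on this class. I take $N=f(b)+1$ and apply Theorem~\ref{thm:sharp-failure} with this $b$, this $N$, and any $\varepsilon$ (say $\varepsilon=1$). The resulting $G$ has $\nu_k^{1/b}(G)=b$ (the case $c=b$) and $\tau_k(G)=N$. Hence
\[
\tau_k(G)=f(b)+1>f\bigl(\nu_k^{1/b}(G)\bigr),
\]
which is a contradiction. I do not need monotonicity of $f$ here, because $\nu_k^{1/b}(G)$ equals $b$ exactly.

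\emph{Fractional case.} Suppose $f$ witnesses the fractional property on the class. I take $N=\max\{2,f(2)+1\}$ and $\varepsilon=1$, and apply Theorem~\ref{thm:sharp-failure} with $b=1$. Since $N\ge2$, we get $1<\nu_k^*(G)<2$, so $\lceil\nu_k^*(G)\rceil=2$. On the other hand, $\tau_k(G)=N>f(2)$, again a contradiction.

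There is no genuine obstacle, since all the quantitative work is in Theorem~\ref{thm:sharp-failure}. The only points needing care are two small checks. The first is converting the edge bound into the average-degree bound. The second is handling the ceiling in the fractional definition, which is why I insist on $N\ge2$ and $\varepsilon\le1$ so that $\lceil\nu_k^*(G)\rceil$ is pinned to exactly $2$.
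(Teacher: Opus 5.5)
Your proof is correct and follows the paper's argument. Both proofs invoke Theorem~\ref{thm:sharp-failure}: for the integral case they use $\nu_k^{1/b}(G)=b$, and for the fractional case they take $\varepsilon=1$ and $N\ge2$, so that $\lceil\nu_k^*(G)\rceil=2$; in each case $\tau_k(G)=N$ is made to exceed any proposed bound. Your explicit choices $N=f(b)+1$ and $N=\max\{2,f(2)+1\}$, together with your check that $|E(G)|\le k|V(G)|$ gives average degree at most $2k$, simply make the paper's ``let $N$ tend to infinity'' concrete.
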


\begin{proof}
For the $1/b$-integral statement, apply Theorem~\ref{thm:sharp-failure} with the prescribed value of $b$ and let $N$ tend to infinity.  Then $\nu_k^{1/b}(G)=b$ while $\tau_k(G)=N$.  For the fractional statement, fix $\varepsilon=1$ and take $N\ge2$.  The same theorem gives
\[
\lceil\nu_k^*(G)\rceil=2
\qquad\text{and}\qquad
\tau_k(G)=N,
\]
with $N$ arbitrary.  Both conclusions contradict the corresponding definitions from Section~\ref{sec:parameters}.
\end{proof}

The fractional constant in Theorem~\ref{thm:sharp-failure} cannot be replaced by one when the transversal number exceeds one.

\begin{proposition}[The fractional obstruction is optimal]\label{prop:fractional-optimal}
Let $\calF$ be a nonempty finite family of nonempty sets.  If $\tau(\calF)\ge2$, then $\nu^*(\calF)>1$.
\end{proposition}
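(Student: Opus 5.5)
The plan is to exhibit an explicit fractional packing of value strictly larger than one. By LP duality, this is equivalent to showing that no fractional transversal of weight at most one exists. The direct construction seems cleanest. Since $\nu^*$ is unchanged by passing to inclusion-minimal members, we may assume $\calF$ is a clutter on a finite ground set $V$. The hypothesis $\tau(\calF)\ge2$ says that no single element meets every member.

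Pick any $F_0\in\calF$. For each $v\in F_0$, the hypothesis supplies a member $F_v\in\calF$ with $v\notin F_v$. Start from the feasible packing $y_{F_0}=1$. The aim is to perturb it to $y_{F_0}=1-\delta$ together with small weights $\eta$ on the sets $F_v$, chosen so that the total exceeds one while every vertex load stays at most one.

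The load count goes as follows. Let $m=|F_0|$. A vertex $u\in F_0$ has load at most
\[
(1-\delta)+\eta\cdot|\{v\in F_0: u\in F_v\}|\le(1-\delta)+(m-1)\eta,
\]
since $u\notin F_u$. A vertex outside $F_0$ has load at most $m\eta$. Choosing $\eta=\delta/(m-1)$ when $m\ge2$, with $\delta$ small enough that $m\eta\le1$, keeps every load at most one. The total weight is then
\[
1-\delta+m\delta/(m-1)>1.
\]
If some $F_v$ coincide, their weights add, but the load count above only used multiplicities, so the same bound applies. If $m=1$, say $F_0=\{v\}$, then $F_v$ is disjoint from $F_0$, and $y_{F_0}=y_{F_v}=1$ already gives value $2$.

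The step needing care is the load bound on vertices of $F_0$. It relies on the fact that each $u\in F_0$ is missed by at least its own designated set $F_u$. This is the only place $\tau\ge2$ is used, and it is what prevents the perturbation from overloading $F_0$. The argument is otherwise routine.

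Alternatively, one can argue by duality. A fractional transversal $x$ of weight at most one that covers $F_0$ must satisfy $\sum_{u\in F_0}x_u=1$, so it is supported on $F_0$. Some $F_v$ with $x_v>0$ then avoids mass, which forces an inequality that breaks. I would present the direct packing construction, since it is self-contained.
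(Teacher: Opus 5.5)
Your proof is correct, and it takes the primal route where the paper takes the dual one. The paper uses LP duality to pick a fractional transversal $x$ of total weight $\nu^*(\calF)=1$. It notes that $1\le\sum_{v\in F}x_v\le\sum_v x_v=1$ forces every vertex of positive weight into every member, which yields a transversal of size one. Your closing ``alternative'' is essentially this argument.

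Your main construction instead needs only the trivial direction: a feasible packing bounds $\nu^*$ from below, so no strong duality is required. Its steps all check out:
\begin{itemize}
\item $F_v\ne F_0$ is automatic, since $v\in F_0\setminus F_v$.
\item Coinciding sets $F_v$ are handled correctly by your multiplicity count.
\item The opening reduction to a clutter is never used and can be dropped.
\end{itemize}

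What your route buys is a rate. Take the extreme choice $y_{F_0}=\eta=1/m$, that is, $\delta=(m-1)/m$. This keeps every load at most one, covers $m=1$ without a separate case, and gives total weight $1+1/m$. Hence $\nu^*(\calF)\ge 1+1/\min_{F\in\calF}|F|$ whenever $\tau(\calF)\ge2$. By Lemma~\ref{lem:complete-hypergraph} this bound is attained by $\binom{[r+1]}{r}$. It also shows that a clutter with $\tau\ge2$ and $\nu^*<1+\varepsilon$, as needed in the proof of Theorem~\ref{thm:sharp-failure}, must have all edges of size greater than $1/\varepsilon$. For $N=2$ this is exactly the paper's condition $r>(N-1)/\varepsilon$.

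The paper's dual argument is shorter and shows where a weight-one optimal fractional transversal must sit, namely on $\bigcap\calF$. However, it gives no quantitative gap above one.
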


\begin{proof}
Certainly $\nu^*(\calF)\ge1$, by putting weight one on a single member.  Suppose equality holds, and let $(x_v)$ be a minimum fractional transversal of total weight one.  For every $F\in\calF$,
\[
1\le\sum_{v\in F}x_v\le\sum_v x_v=1.
\]
Hence every vertex with positive weight belongs to every member of $\calF$.  Any such vertex is an integral transversal of size one, a contradiction.
\end{proof}

\begin{remark}[Exact preservation of Hitting Set]
The equality $\tau_k(G)=\tau(\calH)$ embeds the transversal problem for arbitrary finite clutters exactly into the bad-cycle transversal problem while keeping $\chi(G)=k+1$ and $\omega(G)=k$.  Consequently, the bad-cycle transversal problem inherits the full obstruction theory of finite hypergraph transversals.
\end{remark}

\section{The shortest bad cycles}\label{sec:shortest}

The global universality of Section~\ref{sec:realization} begins only after the first length layer.  At minimum length, one fixed graph controls the entire family.

For $k\ge2$, define
\[
\ell_k=
\begin{cases}
 k+1,&k\text{ even},\\
 k+2,&k\text{ odd}.
\end{cases}
\]
Define $W_k$ as follows.  If $k$ is even, let $W_k=K_{k+1}$.  If $k$ is odd, start with a clique $Q\cong K_{k+1}$ and add one vertex $x$ adjacent to two vertices of $Q$.  Copies are not required to be induced.

\begin{theorem}[Shortest-witness theorem]\label{thm:shortest-witness}
Let $k\ge2$.  An odd cycle of minimum possible bad length $\ell_k$ is $k$-bad if and only if its vertex span contains a copy of $W_k$.  Equivalently, a graph $G$ contains a $k$-bad odd cycle of length $\ell_k$ if and only if $W_k\subseteq G$.
\end{theorem}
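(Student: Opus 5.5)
The plan is a counting argument on the span, followed by a short analysis of graphs whose chromatic number is close to their order. Throughout, write $H=G[V(C)]$ for an odd cycle $C$ of length $\ell$.

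\emph{Step 1: $\ell_k$ is a lower bound.} If $C$ is $k$-bad then $\chi(H)\ge k+1$, so $\ell=|V(H)|\ge k+1$. Since $\ell$ is odd, this gives $\ell\ge k+1$ when $k$ is even and $\ell\ge k+2$ when $k$ is odd, that is, $\ell\ge\ell_k$. This justifies the phrase ``minimum possible bad length''. Attainment will follow from the converse in Step 3.

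\emph{Step 2: a bad cycle of length $\ell_k$ forces $W_k$.}
\begin{itemize}
\item For even $k$, $H$ has $k+1$ vertices and $\chi(H)\ge k+1$. Hence $H$ is complete: any nonadjacent pair could share a colour, giving $\chi(H)\le k$. So $H\supseteq K_{k+1}=W_k$.
\item For odd $k$, put $n=k+2$ and let $\overline H$ be the complement of $H$. The key observation is $\chi(H)\le n-\nu(\overline H)$, because each edge of a matching in $\overline H$ is a nonadjacent pair of $H$ that can share a colour. So $\chi(H)\ge n-1$ forces $\nu(\overline H)\le1$. The edge set of $\overline H$ is then either empty, a star, or a triangle.
\item A triangle $\{a,b,c\}$ in $\overline H$ is impossible. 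Here $a,b,c$ are pairwise nonadjacent in $H$, so they can receive one common colour, giving $\chi(H)\le n-2=k$, a contradiction.
\item Hence there is a vertex $x$ covering all edges of $\overline H$ (any vertex if $\overline H$ has no edges). Then $H-x$ is a $K_{k+1}$ on the remaining $k+1$ vertices.
\item Since $C$ is a Hamiltonian cycle of $H$, $x$ has two neighbours on $C$, and these lie in the clique. Thus $H\supseteq W_k$.
\end{itemize}

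\emph{Step 3: every copy of $W_k$ spans a bad cycle of length $\ell_k$.} Suppose $W_k\subseteq G$.
\begin{itemize}
\item For even $k$, $K_{k+1}$ has a Hamiltonian cycle of odd length $k+1$. Its span contains $K_{k+1}$, so it is $k$-bad.
\item For odd $k$, let $x$ be adjacent to the clique vertices $q_1,q_2$. Take the cycle $x\,q_1\,P\,q_2\,x$, where $P$ is a Hamiltonian $q_1$--$q_2$ path in the clique $K_{k+1}$. This cycle has length $k+2=\ell_k$, which is odd, and its span contains $K_{k+1}$, so $\chi\ge k+1$.
\end{itemize}
Together with Step 2, this proves both forms of the statement.

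The only step with real content is the odd case of Step 2. The first attempt would be to bound $\chi$ via a matching in the complement, and that alone leaves the triangle case open. That case must be excluded separately by colouring an independent triple with one colour, as above.
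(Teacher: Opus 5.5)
Your proof is correct and follows the paper's argument essentially step for step: the same parity lower bound, the same Hamilton-path-through-$x$ construction, and the same complement analysis for odd $k$. Your bound $\chi(H)\le n-\nu(\overline H)$ together with the separate triangle exclusion is just the paper's ``a $k$-colouring must save two clique classes'' argument split into its two cases, and the only cosmetic difference is that you quote the classification of graphs with matching number at most one (empty, star, or triangle) where the paper derives the star structure directly from triangle-freeness.
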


\begin{proof}
Every $k$-bad cycle has at least $k+1$ vertices, and its length is odd, so its length is at least $\ell_k$.

The graph $W_k$ has chromatic number $k+1$ and a spanning odd cycle.  This is immediate for even $k$.  For odd $k$, take a Hamilton path in the clique $Q$ between the two neighbours of $x$ and close it through $x$.  Thus a copy of $W_k$ gives a bad cycle of length $\ell_k$.

Conversely, let $C$ be a $k$-bad cycle of length $\ell_k$, and put $H=G[V(C)]$.  If $k$ is even, then $H$ has $k+1$ vertices and chromatic number at least $k+1$, so $H=K_{k+1}=W_k$.

Suppose that $k$ is odd.  Then $H$ has $k+2$ vertices and $\chi(H)\ge k+1$.  A $k$-colouring of $H$ is equivalent to a partition of $V(\overline H)$ into at most $k$ cliques.  Since $|V(H)|=k+2$, such a partition must save at least two classes relative to the partition into singletons.  This is possible exactly when either one clique class has size at least three, yielding a triangle in $\overline H$, or two distinct clique classes each have size at least two, yielding two vertex-disjoint edges in $\overline H$.  Conversely, either a triangle or two vertex-disjoint edges gives a clique partition with at most $k$ classes.  Since $\chi(H)\ge k+1$, no such $k$-colouring exists.  Hence $\overline H$ is triangle-free and has matching number at most one.

If $E(\overline H)=\emptyset$, then $H=K_{k+2}$, which contains $W_k$, and the conclusion is immediate.  Otherwise every triangle-free graph with matching number at most one is a star together with isolated vertices.  Indeed, choose an edge $ab$.  If some edge is not incident with $a$, it must meet $ab$, say it is $bc$; then every further edge must meet both $ab$ and $bc$, and triangle-freeness forces it to be incident with $b$.  Thus all nonedges of $H$ share one endpoint $x$, and the other $k+1$ vertices form a clique $Q$.  Since $C$ is spanning, $x$ has at least two neighbours on $C$, both in $Q$.  These vertices together with $Q$ form a copy of $W_k$.
\end{proof}

\begin{corollary}[Universality beyond the shortest layer]\label{cor:universality-beyond-shortest}
Let $k\ge3$, and let $G$ be any graph produced by Theorem~\ref{thm:realization}.  Then $G$ contains no $k$-bad odd cycle of length $\ell_k$.
\end{corollary}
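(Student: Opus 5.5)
The plan is to combine the Shortest-witness Theorem~\ref{thm:shortest-witness} with the clique bound from Theorem~\ref{thm:realization}\ref{item:chromatic-host}. By Theorem~\ref{thm:shortest-witness}, $G$ contains a $k$-bad odd cycle of length $\ell_k$ if and only if $W_k\subseteq G$. Whatever the parity of $k$, the graph $W_k$ contains a copy of $K_{k+1}$: for even $k$ we have $W_k=K_{k+1}$ itself, and for odd $k$ the clique $Q\cong K_{k+1}$ sits inside $W_k$. So $W_k\subseteq G$ would give $\omega(G)\ge k+1$.

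On the other hand, Theorem~\ref{thm:realization}\ref{item:chromatic-host} states that every graph produced by the construction has $\omega(G)=k$. This contradicts $\omega(G)\ge k+1$, so $W_k\not\subseteq G$. The equivalent form of Theorem~\ref{thm:shortest-witness} then shows that $G$ has no $k$-bad odd cycle of length $\ell_k$.

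The argument depends entirely on the clique-number computation in the realization proof; it needs no structure of the gadgets beyond $\omega(G)=k$. The only point needing care is that Theorem~\ref{thm:shortest-witness} allows non-induced copies. This causes no difficulty, because any copy of $W_k$, induced or not, still spans $k+1$ pairwise adjacent vertices of $G$.
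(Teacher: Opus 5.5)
Your proposal is correct and is exactly the paper's argument: every copy of $W_k$ (induced or not) contains a $K_{k+1}$, which is excluded by $\omega(G)=k$ from Theorem~\ref{thm:realization}. The equivalent form of Theorem~\ref{thm:shortest-witness} then rules out $k$-bad odd cycles of length $\ell_k$.
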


\begin{proof}
Every copy of $W_k$ contains a $K_{k+1}$, whereas Theorem~\ref{thm:realization} gives $\omega(G)=k$.  The conclusion follows from Theorem~\ref{thm:shortest-witness}.
\end{proof}

\begin{corollary}[Finite first layer]\label{cor:finite-first-layer}
For fixed $k$, hitting or deleting all shortest $k$-bad odd cycles is exactly the corresponding fixed-subgraph problem for $W_k$.
\end{corollary}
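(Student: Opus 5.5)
The plan is to reduce both directions to Theorem~\ref{thm:shortest-witness}, which says that a shortest $k$-bad odd cycle, of length $\ell_k$, is exactly an odd cycle of length $\ell_k$ whose span contains a copy of $W_k$. For fixed $k$, $W_k$ has at most $k+2$ vertices, so the corollary amounts to two correspondences: between the shortest bad cycles and the copies of $W_k$ in $G$, and between hitting (or deleting) each of these families. I would read ``hitting'' as finding a vertex set meeting every member, and ``deleting'' as removing a vertex (or edge) set so that none remain.

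\textbf{Forward direction.} Let $X\subseteq V(G)$ meet every copy of $W_k$, and let $C$ be a $k$-bad odd cycle of length $\ell_k$. By Theorem~\ref{thm:shortest-witness}, $G[V(C)]$ contains a copy $W$ of $W_k$. Then $X$ meets $V(W)\subseteq V(C)$, so $X$ hits $C$.

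\textbf{Reverse direction.} Let $X$ meet every shortest bad cycle, and let $W$ be a copy of $W_k$. The proof of Theorem~\ref{thm:shortest-witness} shows that $W$ has a spanning odd cycle $C_W$ of length $\ell_k$: a Hamilton cycle of $K_{k+1}$ for even $k$, and for odd $k$ a Hamilton path of $Q$ between the two neighbours of $x$, closed through $x$. Since $V(C_W)=V(W)$ and $\chi(W)=k+1$, the span $G[V(C_W)]\supseteq W$ has chromatic number at least $k+1$. So $C_W$ is a shortest bad cycle, $X$ meets it, and hence $X$ meets $V(W)$.

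Together these give equality of the two transversal families. Applying the same argument to $G-X$ shows that $G-X$ has no shortest bad cycle if and only if $W_k\not\subseteq G-X$, which is the equivalence for deletion.

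For the edge-deletion version, the same two steps go through with edge sets in place of vertex sets. If an edge set $Y$ meets every copy of $W_k$ and $C$ is a shortest bad cycle, then the copy $W\subseteq G[V(C)]$ from Theorem~\ref{thm:shortest-witness} loses an edge of $Y$. However, that edge lies in the span of $C$, not necessarily on $C$ itself. So I would state the deletion version as ``$G-Y$ contains no shortest bad cycle if and only if $G-Y$ contains no $W_k$'', which follows directly from the second sentence of Theorem~\ref{thm:shortest-witness} applied to $G-Y$.

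The main obstacle is this interpretive point: what ``hitting'' means for edge sets, and the fact that the witness $W_k$ need not lie on the cycle's own edges. I would resolve it by phrasing both problems as deletion and applying the ``equivalently'' clause of Theorem~\ref{thm:shortest-witness} to the reduced graph. The vertex-hitting statement can then be handled by the explicit two-direction argument above.
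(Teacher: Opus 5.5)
Your proposal is correct and matches the paper, which gives no separate proof and treats this corollary as an immediate consequence of Theorem~\ref{thm:shortest-witness}: your two directions show that vertex sets of shortest $k$-bad odd cycles are exactly vertex sets of copies of $W_k$ (since $|V(W_k)|=\ell_k$ forces $V(W)=V(C)$), and deletion follows from the ``equivalently'' clause applied to the reduced graph. Your caveat distinguishing edge-hitting from edge-deletion is a sensible clarification that the paper leaves implicit.
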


\begin{remark}
Theorem~\ref{thm:shortest-witness} is the local counterpart to Theorem~\ref{thm:realization}.  The first length layer is governed by one finite witness, while unrestricted lengths can simulate every finite clutter.
\end{remark}

\section{Further directions: bounded certificates and moving cores}\label{sec:bounded}

Sections~\ref{sec:realization}--\ref{sec:shortest} form the main theorem chain.  We close with a brief structural outlook explaining what remains when the chromatic obstruction on a bad cycle has bounded order.  Throughout this section, $k\ge2$.

\begin{definition}[Bounded chromatic certificates]
Let $C$ be an odd cycle of a graph $G$.  A set $Y\subseteq V(C)$ is a \emph{$k$-chromatic certificate} if $\chi(G[Y])\ge k+1$.  The cycle has \emph{certificate size at most $s$} if it has such a certificate with $|Y|\le s$.
\end{definition}

A \emph{real copy} of a fixed labelled graph $B$ in $G$ is an injective edge-preserving map $\phi:V(B)\to V(G)$.  An odd cycle is \emph{$B$-certified} if its span contains a real copy of $B$.  Write $\calC_B(G)$ for the family of $B$-certified odd cycles.

\begin{lemma}[Vertex-critical-core reduction]\label{lem:critical-core}
Let $Y\subseteq V(G)$ satisfy $\chi(G[Y])\ge k+1$, and choose $S\subseteq Y$ inclusion-minimal with this property.  Then $B=G[S]$ is vertex-$(k+1)$-critical and therefore $2$-connected.
\end{lemma}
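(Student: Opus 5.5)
The plan has two parts: first establish vertex-$(k+1)$-criticality of $B=G[S]$ from the minimality of $S$, then derive $2$-connectivity from criticality.

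\emph{Criticality.} Since $S\subseteq Y$ was chosen with $\chi(G[S])\ge k+1$, and $G[S]$ is a subgraph of $G[Y]$, I will first argue that $\chi(B)=k+1$ exactly. Deleting a single vertex lowers the chromatic number by at most one. Because $S$ is nonempty (a graph with chromatic number at least $k+1\ge3$ has vertices), pick any $v\in S$. Minimality gives $\chi(B-v)\le k$, hence $\chi(B)\le k+1$, so $\chi(B)=k+1$. For every $v\in S$, minimality gives $\chi(B-v)\le k$, and the same one-vertex bound gives $\chi(B-v)\ge\chi(B)-1=k$. Thus $\chi(B-v)=k$ for every $v$, which is exactly vertex-$(k+1)$-criticality in the sense defined before Lemma~\ref{lem:critical-padding}.

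\emph{Connectivity.} I will show the standard fact that a vertex-$q$-critical graph with $q\ge3$ is $2$-connected; here $q=k+1\ge3$.

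First, $B$ is connected. Otherwise $\chi(B)$ is the maximum over its components, so some proper induced subgraph already has chromatic number $k+1$. Deleting a vertex outside that component would then keep the chromatic number $k+1$, contradicting criticality.

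Second, $B$ has at least three vertices since $\chi(B)\ge3$. So it remains to exclude a cut vertex $v$. Suppose $B-v$ has components $D_1,\dots,D_t$ with $t\ge2$, and set $B_i=B[V(D_i)\cup\{v\}]$. Each $B_i$ is a proper induced subgraph of $B$ missing some vertex, namely a vertex of another component, so it is a subgraph of $B-w$ for some $w$. Hence $B_i$ is $k$-colourable. Permute the colours in each $k$-colouring of $B_i$ so that $v$ receives colour $1$ in all of them. The colourings then agree on $v$, and since there are no edges between distinct $D_i$, their union is a proper $k$-colouring of $B$. This contradicts $\chi(B)=k+1$.

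The only point needing care is this gluing step at a cut vertex. It is routine because distinct components of $B-v$ are nonadjacent, so the local colourings can be combined once they agree at $v$. I do not expect a genuine obstacle.
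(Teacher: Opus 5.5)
Your proof is correct and follows essentially the same route as the paper: the fresh-colour bound $\chi(B)\le\chi(B-v)+1$ gives $\chi(B)=k+1$ and $\chi(B-v)=k$, and $2$-connectivity comes from permuting colours so that independent $k$-colourings of the pieces at a cut vertex agree there. You spell out the connectedness step, the gluing step and the three-vertex requirement in more detail than the paper does, but the argument is the same.
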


\begin{proof}
Every proper induced subgraph of $B$ is $k$-colourable.  If $\chi(B)\ge k+2$, then colouring $B-v$ with at most $k$ colours and assigning a fresh colour to $v$ would give $\chi(B)\le k+1$, a contradiction.  Hence $\chi(B)=k+1$.  For every $v\in V(B)$, minimality gives $\chi(B-v)\le k$; if $\chi(B-v)\le k-1$, one fresh colour on $v$ would give $\chi(B)\le k$.  Thus $\chi(B-v)=k$ for every $v$.  A vertex-critical graph of chromatic number at least three is connected and has no cutvertex: colour the blocks at a cutvertex separately and permute colours so that the cutvertex receives the same colour.  Hence $B$ is $2$-connected.
\end{proof}

Fix a labelled graph $B$ with vertex set $[r]$.  For a cyclic order $\pi=(\pi_1,\ldots,\pi_r)$ and a vector $p\in\{0,1\}^r$ with odd coordinate sum, call a cycle a \emph{$(B,\pi,p)$-cycle} if it contains a real copy $\phi$ of $B$, the vertices $\phi(\pi_1),\ldots,\phi(\pi_r)$ occur in this cyclic order, and the arc between consecutive core vertices has parity prescribed by $p$.

\begin{proposition}[Finite real-core parity normal form]\label{prop:parity-normal-form}
For fixed $k$ and $s$, the odd cycles having certificate size at most $s$ form a finite union of $(B,\pi,p)$-cycle families, where $B$ ranges over the labelled $2$-connected vertex-$(k+1)$-critical graphs on at most $s$ vertices.
\end{proposition}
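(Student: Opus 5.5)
The plan is to prove both inclusions. Fix $k$ and $s$, and call the union $\mathcal U$.

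First, the inclusion of the certificate-bounded odd cycles in $\mathcal U$. Let $C$ be an odd cycle with a certificate $Y\subseteq V(C)$, $|Y|\le s$, $\chi(G[Y])\ge k+1$. Apply Lemma~\ref{lem:critical-core} to $Y$ to get an inclusion-minimal $S\subseteq Y$ such that $G[S]$ is vertex-$(k+1)$-critical and $2$-connected, with $|S|\le s$. Set $r=|S|$.

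Now choose a labelling. The vertices of $S$ occur on $C$ in some cyclic order $v_1,\dots,v_r$. Label them so that $v_i$ corresponds to $i\in[r]$. Let $B$ be the labelled graph on $[r]$ with $ij\in E(B)$ iff $v_iv_j\in E(G)$, and let $\phi(i)=v_i$. Then $\phi$ is a real copy of $B$, since it is injective and preserves edges; in fact it is induced, which is stronger than needed. Take $\pi$ to be the identity cyclic order. Let $p_i$ be the parity of the length of the arc of $C$ from $v_i$ to $v_{i+1}$, indices mod $r$. These arc lengths sum to $|C|$, which is odd, so $p$ has odd coordinate sum. Hence $C$ is a $(B,\pi,p)$-cycle.

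The triples $(B,\pi,p)$ range over labelled graphs on at most $s$ vertices, cyclic orders of $[r]$, and parity vectors in $\{0,1\}^r$. There are finitely many of these, so the union is finite. Relabelling would even allow fixing $\pi$ to be the identity, but this is not needed.

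Next, the reverse inclusion. Let $C$ be a $(B,\pi,p)$-cycle for such a $B$. It is odd because its length is the sum of the arc lengths, whose parity is $\sum p_i\equiv1$. Its span contains $\phi(V(B))$, a set of $r\le s$ vertices. $G[\phi(V(B))]$ contains $\phi(B)$ as a spanning subgraph, since the copy need not be induced. Chromatic number is monotone under adding edges, so $\chi(G[\phi(V(B))])\ge\chi(B)=k+1$. Thus $\phi(V(B))$ is a certificate of size at most $s$.

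I expect no genuine obstacle. The only subtle point is the one just handled: real copies are not induced, so the reverse direction relies on monotonicity of $\chi$ under adding edges, and the forward direction only needs the induced copy to exist. I would also check the degenerate cyclic-order conventions for $r$ small. For $k\ge2$ we have $r\ge k+1\ge3$, so the cyclic order and the arcs are well defined.
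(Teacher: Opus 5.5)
Your proof is correct and follows the paper's argument. You pass to an inclusion-minimal certificate via Lemma~\ref{lem:critical-core}, read off the cyclic order and the odd-sum arc parities along the witnessing cycle, note for the converse that a $(B,\pi,p)$-cycle is odd and its real core is a certificate of size at most $s$, and get finiteness from the finitely many labelled cores, orders and parity vectors; your explicit use of monotonicity of $\chi$ for non-induced real copies, and labelling along the cycle so that $\pi$ is the identity, just fill in details the paper leaves implicit.
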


\begin{proof}
Choose an inclusion-minimal certificate and apply Lemma~\ref{lem:critical-core}.  Reading its vertices around the witnessing cycle determines $\pi$, while the intervening arc parities determine $p$ and have odd sum.  Conversely, every $(B,\pi,p)$-cycle is odd and contains the prescribed real core.  Only finitely many labelled cores, cyclic orders, and parity vectors occur for fixed $s$.
\end{proof}

Because the union is finite, pattern-wise packing-covering bounds combine by taking the union of the corresponding transversals.  Here a real copy keeps every edge of $B$ as an actual chord in the cycle span; only the arcs between consecutive core vertices are flexible.  Two elementary observations indicate the positive boundary.  If $B$ has an odd Hamilton cycle and $r=|V(B)|$, the rank-$r$ hypergraph of real $B$-copies gives
\[
\tau(\calC_B(G))\le r\,\nu(\calC_B(G))
\qquad\text{and}\qquad
\tau(\calC_B(G))\le r\,\nu^*(\calC_B(G)).
\]
If $B$ is Hamiltonian-connected, then every $B$-certified odd cycle contains a cycle formed by one Hamilton path in the core and one parity-correct external ear: read the core vertices around the cycle, choose an intervening arc whose parity agrees with $|V(B)|$, and replace the remaining arcs by a Hamilton path in the core.  The remaining difficulty is the simultaneous movement of arbitrarily many real copies.

For a fixed subgraph $Q\subseteq G$, let $\calC_Q^{\rm odd}(G)$ be the odd cycles containing every vertex of $Q$, and write $\tau_Q$, $\nu_Q^{1/2}$, and $\nu_Q^*$ for the corresponding parameters.

\begin{theorem}[Fixed real cores]\label{thm:fixed-core}
For every integer $s\ge1$ there is a nondecreasing function $f_s$ such that, whenever $|V(Q)|\le s$,
\[
\tau_Q(G)\le f_s\bigl(\nu_Q^{1/2}(G)\bigr)
\qquad\text{and}\qquad
\tau_Q(G)\le f_s\bigl(\lceil2\nu_Q^*(G)\rceil\bigr).
\]
\end{theorem}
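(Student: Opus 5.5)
The plan is to observe that the statement is nearly trivial once $Q$ is nonempty, and to reduce the remaining case $V(Q)=\emptyset$ to Reed's half-integral theorem~\cite{Reed}. The key point is that every member of $\calC_Q^{\rm odd}(G)$ contains \emph{every} vertex of $Q$, so vertices of $Q$ are forced to carry the full load of any packing.

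\textbf{Case $V(Q)\ne\emptyset$.} Fix $q\in V(Q)$.
\begin{itemize}[leftmargin=*]
\item If $\calC_Q^{\rm odd}(G)=\emptyset$, then $\tau_Q(G)=0$.
\item Otherwise $\{q\}$ meets every cycle of the family, so $\tau_Q(G)\le 1$.
\item In that case $\nu_Q^{1/2}(G)\ge1$ by taking one cycle.
\item Also $\nu_Q^*(G)>0$, so $\lceil 2\nu_Q^*(G)\rceil\ge1$.
\end{itemize}
For completeness one can note that $\nu_Q^{1/2}(G)\le2$ and $\nu_Q^*(G)\le1$, since the load at $q$ equals the number, respectively the total weight, of packed cycles. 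These upper bounds are not needed for the inequality. Consequently any nondecreasing $f_s$ with $f_s(0)=0$ and $f_s(n)\ge1$ for $n\ge1$ handles this case. The value of $s$ plays no role here.

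\textbf{Case $V(Q)=\emptyset$.} Here $\calC_Q^{\rm odd}(G)$ is the family of all odd cycles of $G$.
\begin{itemize}[leftmargin=*]
\item Reed's theorem supplies a nondecreasing $g$ with $\tau_{\rm odd}(G)\le g(\nu^{1/2}_{\rm odd}(G))$.
\item For the fractional bound, use the inequality $\nu^{1/2}\le 2\nu^*$ recorded in the introduction: a half-integral packing of $h$ cycles, each weighted $1/2$, is a fractional packing of value $h/2$.
\item Hence $\nu^{1/2}_{\rm odd}(G)\le\lfloor 2\nu^*_{\rm odd}(G)\rfloor\le\lceil2\nu^*_{\rm odd}(G)\rceil$, and monotonicity of $g$ gives $\tau\le g(\lceil 2\nu^*\rceil)$.
\end{itemize}

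\textbf{Combining the cases.} Set $f_s(0)=0$ and $f_s(n)=\max\{1,g(n)\}$ for $n\ge1$, where $g$ may be assumed nondecreasing with $g(0)=0$. When $\nu=0$ the family is empty and $\tau=0$, so this single $f_s$ works in both cases and is nondecreasing.

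The only step with genuine content is the invocation of Reed's theorem in the case $Q$ empty. One should check whether the intended reading of $|V(Q)|\le s$ permits $Q$ to be empty. If it does not, the theorem reduces entirely to the one-vertex transversal argument, and $f_s(n)=\min\{n,1\}$ suffices.
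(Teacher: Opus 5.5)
Your proof is correct, and it takes a genuinely different, much more elementary route than the paper.

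\textbf{The paper's route.} It encodes $\calC_Q^{\mathrm{odd}}(G)$ as the cycles avoiding the value $0$ in $m=|V(Q)|+1$ labelled coordinates. There is one $\mathbb Z_2$ coordinate for parity. For each $q\in V(Q)$ there is one $\mathbb Z_3$ coordinate labelling the edges at $q$ by $1$, so a cycle has value $2$ there exactly when it passes through $q$. The paper then invokes the unified half-integral theorem of Gollin, Hendrey, Kawarabayashi, Kwon and Oum and takes $f_s(t)=\max\{F_{m,1}(q+1):1\le m\le s+1,\ 0\le q\le t\}$. It gets the fractional bound from $\nu_Q^{1/2}\le 2\nu_Q^*$, just as you do.

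\textbf{Your route.} You observe that every member of the family contains every vertex of $Q$. So for nonempty $Q$ any single $q\in V(Q)$ is a transversal, giving $\tau_Q(G)\le 1$ and indeed $\nu_Q^*(G)\in\{0,1\}$. The only remaining case is $Q=\emptyset$, where Reed's theorem is exactly the $m=1$ case of the paper's invocation.

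\textbf{What each buys.} Your argument gives an explicit bound independent of $s$, namely $\min\{n,1\}$ whenever $Q$ is nonempty. It avoids the group-labelled machinery entirely. It also makes visible that the statement as formulated has no content beyond Reed's theorem. Likewise, the paper's subsequent remark about unions over $m$ specified copies $Q_1,\ldots,Q_m$ reduces to the trivial bound $\tau\le m$ when the copies are nonempty. The paper's encoding is a uniform template that would matter for constraints that do not force a bounded vertex set into every cycle. For this particular statement, however, it yields nothing your argument does not.
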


\begin{proof}
Let $F_{m,1}$ be the bound supplied by the unified half-integral theorem of Gollin, Hendrey, Kawarabayashi, Kwon, and Oum~\cite{GollinEtAlHalf}, and set
\[
f_s(t)=\max\{F_{m,1}(q+1):1\le m\le s+1,\ 0\le q\le t\}.
\]
Use one $\mathbb Z_2$ coordinate in which every edge has label $1$, so a cycle has nonzero value exactly when it is odd.  For each $q\in V(Q)$, use one $\mathbb Z_3$ coordinate in which precisely the edges incident with $q$ have label $1$.  A cycle has value $2$ in this coordinate exactly when it contains $q$, and value $0$ otherwise.  Thus $\calC_Q^{\rm odd}(G)$ is the family of cycles avoiding the forbidden value $0$ in $m=|V(Q)|+1\le s+1$ coordinates.  If $t=\nu_Q^{1/2}(G)$, the cited theorem gives $\tau_Q(G)\le F_{m,1}(t+1)\le f_s(t)$.  The fractional inequality follows from $\nu_Q^{1/2}(G)\le2\nu_Q^*(G)$ and monotonicity of $f_s$.
\end{proof}

For every fixed $m$, the same argument handles a union of families associated with at most $m$ specified copies $Q_1,\ldots,Q_m$: applying Theorem~\ref{thm:fixed-core} to each copy and taking the union of the transversals gives a bound depending only on $s$ and $m$.

Allowing the core copy to move already contains a natural demand-matching problem.  Marx and Wollan showed that hereditary demand classes containing arbitrary matchings do not have a uniform integral Erd\H{o}s--P\'{o}sa analogue for valid paths~\cite{MarxWollan}.  We are not aware of a corresponding half-integral bound independent of the size of the matching.

\begin{problem}[Exact-one-marker cycles]\label{prob:exact-one-marker}
Let
\[
D=\{s_it_i:i\in I\}\subseteq\binom{V(G)}2\setminus E(G)
\]
be a matching on $V(G)$, and let $\calX(G,D)$ be the cycles in $G\cup D$ that use exactly one edge of $D$.  Does $\calX(G,D)$ satisfy a half-integral Erd\H{o}s--P\'{o}sa theorem with a bound independent of $|I|$?  Does the same hold after prescribing the parity of the $G$-part?
\end{problem}

Deleting the unique demand edge identifies these cycles with prescribed-pair paths in $G$.  Existing $A$-path theorems, including the finite-abelian-group result of Kwon and Yoo, and the finite-group theorem for non-null $S$--$T$ paths of Chekan et al., cover several nearby fixed-label settings~\cite{ChudnovskyEtAl,BruhnHeinleinJoos,KwonYoo,ChekanEtAl}, but do not directly give a uniform bound when the set of pair identities is unbounded.  Proposition~\ref{prop:parity-normal-form} leads to the broader question.

\begin{problem}[Moving real-core parity linkages]\label{prob:moving-core}
Fix a graph $B$, a cyclic order $\pi$ of $V(B)$, and an odd parity vector $p$.  Do the $(B,\pi,p)$-cycles satisfy a half-integral Erd\H{o}s--P\'{o}sa theorem?  At least, do they satisfy a fractional one?
\end{problem}

Theorem~\ref{thm:realization} shows that growing cores already encode arbitrary clutters.  Fixing the abstract real core leaves a finite moving-core linkage problem.  The main theorem and the bounded-certificate outlook therefore mark a sharp boundary between universal finite packing-covering complexity and finitely many fixed-core parity patterns.

\end{document}